\documentclass[12pt,reqno,a4paper]{amsart}
\usepackage[utf8]{inputenc}
\usepackage[T1]{fontenc}
\usepackage{fullpage}
\usepackage{hyperref} 
\usepackage{enumitem}
\usepackage{amsmath,amssymb,amsthm,color,mathscinet}
\usepackage{nameref}
\usepackage{graphicx}
\usepackage{subcaption}
\usepackage{moreverb}
\usepackage{siunitx}
\sisetup{scientific-notation=true,per-mode=symbol,exponent-product = \cdot}
\usepackage{pgfplots}
\pgfplotsset{compat=1.9}
\usepackage{pgfplotstable}



\def\abs#1{|#1|}
\def\norm#1#2{\|#1\|_{#2}}

\def\prod#1#2{{\langle #1,#2\rangle}}
\def\prodh#1#2{{\langle #1,#2\rangle}_h}
\def\heff{\boldsymbol{h}_{\operatorname{eff}}}
\def\Deltah#1{\Delta_h #1}
\def\diam#1{\operatorname{diam} (#1)}

\def\diver{\operatorname{div}}
\def\Ppih#1{\mathbb{P}_h #1}
\def\ppi#1{\boldsymbol{\pi}( #1 )}

\def\ppih#1#2#3{\boldsymbol{\Pi}_h (#1,#2,#3)}
\def\ppihmi{\boldsymbol{\Pi}_{h}^i}
\def\ppihmj{\boldsymbol{\Pi}_{h}^j}
\def\ppihkm{\boldsymbol{\Pi}_{hk}}

\def\mid#1#2{#1^{#2 + \frac12}}
\def\midh#1#2{#1_h^{#2 + \frac12}}
\def\halfh#1#2{#1_h^{#2 + \frac12}}

\def\dt#1#2{{\mathop{\mathrm{d_t}#1}}^{#2+1}}
\def\dth#1#2{ \mathop{\mathrm{d_t}} { #1_h^{#2+1} } }
\def\dtshort{\mathop{\mathrm{d_t}}}

\def\R{\mathbb{R}}


\def\0{\boldsymbol{0}}

\def\NN{\mathcal N}

\def\H{\boldsymbol{H}}

\def\Cex{C_{\mathrm{\bf ex}}}
\def\Cmesh{C_{\mathrm{\bf mesh}}}

\def\hh{\boldsymbol{h}}

\def\mm{\boldsymbol{m}}
\def\nn{\boldsymbol{n}}

\def\vv{\boldsymbol{v}}

\def\HH{\boldsymbol{H}}
\def\ff{\boldsymbol{f}}
\def\vv{\boldsymbol{v}}

\def\xx{\boldsymbol{x}}
\def\yy{\boldsymbol{y}}
\def\zz{\boldsymbol{z}}
\def\FF{\boldsymbol{F}}

\def\cchi{\boldsymbol{\chi}}
\def\vvarphi{\boldsymbol{\varphi}}
\def\ppsi{\boldsymbol{\psi}}

\def\Ppi{\boldsymbol{\mathbb{P}}}
\def\Ttetha{\boldsymbol{\Theta}}
\def\eeta{\boldsymbol{\eta}}
\def\II{\boldsymbol{\mathcal{I}}}
\def\MM{\boldsymbol{M}}
\def\VV{\boldsymbol{V}}

\def\EE#1#2{\mathcal{E} (#1,#2)}

\def\L#1#2{L^#1 (#2) }
\def\LL#1#2{\boldsymbol{L}^#1 (#2) }
\def\H#1#2{H^#1 (#2) }
\def\HH#1#2{\boldsymbol{H}^#1 (#2) }

\def\Vh{\boldsymbol{V}_{\!\!h}}
\def\Sh{S_h}

\def\Ltwoshort{\boldsymbol{L}^2}


\def\ConeL{C^1 \left( \left[0,T\right], \boldsymbol{L}^2 \left(\Omega\right) \right)}

\def\Int#1#2{\int_{#1}^{#2}}
\def\IntSet#1{\int_{#1}}
\def\Sum#1#2{\sum_{#1}^{#2}}
\def\SumSet#1{\sum_{#1}}
\def\d#1{\mathop{\mathrm{d}#1}} 


\def\Nh{\mathcal{N}_h}
\def\N{\mathbb{N}}
\def\R{\mathbb{R}}

\def\lr#1{(#1)}

\def\ltr{\!\left(t\right)}

\def\l0r{\!\left(0\right)}
\def\Trian{\mathcal{T}_h}
\def\ashkzero{\textrm{ as } (h,k) \rightarrow (0,0)}
\def\textashkzero{as $(h,k) \rightarrow (0,0)$}
\def\limhk{\lim\limits_{(h,k) \rightarrow (0,0)}}
\def\liminfhk{\liminf\limits_{(h,k) \rightarrow (0,0)}}

\title{Convergence of an implicit-explicit midpoint scheme\\for computational micromagnetics}

\author{Dirk Praetorius, 
Michele Ruggeri, and 
Bernhard Stiftner}

\address{TU Wien, Institute for Analysis and Scientific Computing, Wiedner Hauptstr. 8-10/E101/4, 1040 Vienna, Austria}
\email{\{dirk.praetorius\,,\,michele.ruggeri\}@tuwien.ac.at}
\email{bernhard.stiftner@tuwien.ac.at \quad \rm (corresponding author)}

\date{\today}
\keywords{micromagnetism, Landau--Lifshitz--Gilbert equation, spin-transfer torque, finite elements, implicit-explicit time-integration}
\subjclass[2010]{35K55, 65M12, 65M60}


\renewcommand{\subsection}[1]{\refstepcounter{subsection}\medskip{\bf\thesubsection.~#1.}}

\newtheorem{lemma}{Lemma}
\newtheorem{proposition}[lemma]{Proposition}
\newtheorem{algorithm}[lemma]{Algorithm}
\newtheorem{definition}[lemma]{Definition}
\newtheorem{remark}[lemma]{Remark}
\newtheorem{theorem}[lemma]{Theorem}


\begin{document}

\begin{abstract}
Based on lowest-order finite elements in space, we consider the numerical integration of the Landau--Lifschitz--Gilbert equation (LLG). The dynamics of LLG is driven by the so-called effective field which usually consists of the exchange field, the external field, and lower-order contributions such as the stray field. The latter requires the solution of an additional partial differential equation in full space. Following Bartels and Prohl (2006) (Convergence of an implicit finite element method for the Landau--Lifschitz--Gilbert equation.\ \textit{SIAM J.\ Numer.\ Anal.}\ 44(4):1405--1419), we employ the implicit midpoint rule to treat the exchange field. However, in order to treat the lower-order terms effectively, we combine the midpoint rule with an explicit Adams--Bashforth scheme. The resulting integrator is formally of second-order in time, and we prove unconditional convergence towards a weak solution of LLG. Numerical experiments underpin the theoretical findings.
\end{abstract}

\maketitle

\section{Introduction}

Time-dependent micromagnetic phenomena are usually modeled by the Landau--Lif\-schitz--Gilbert equation (LLG); see~\eqref{eq:LLG} below. This nonlinear partial differential equation (PDE) describes the behavior of the magnetization of some ferromagnetic body under the influence of the so-called effective field $\heff$. Global-in-time existence (and possible nonuniqueness) of weak solutions of LLG goes back to~\cite{visintin,Alouges1992}. For smooth problems, LLG admits a unique strong solution locally in time, provided the initial data are smooth (cf. \cite{cf2001}). Under similar restrictions the recent work \cite{DS2014} proves a strong-weak uniqueness principle for LLG. Unconditionally convergent numerical integrators have first been analyzed mathematically in~\cite{Bartels2006,Alouges2008}, where $\heff$ only consists of the exchange field (see Section~\ref{subsection:model} below). Here, \emph{unconditional convergence} means that convergence of the numerical integrator enforces no CFL-type coupling of the spatial mesh-size $h$ and the time-step size $k$. Moreover, convergence is understood in the sense that the sequence of discrete solutions for $h,k \rightarrow 0$ admits a subsequence which converges weakly in $\boldsymbol{H}^1$ towards a weak solution of LLG. The tangent plane integrator of~\cite{Alouges2008} requires to solve one \emph{linear} system per time-step (posed in the time-dependent discrete tangent plane), but is formally only first-order in time. Instead, the midpoint scheme of~\cite{Bartels2006} is formally second-order in time, but involves the solution of one nonlinear system per time-step.

Usually, the effective field $\heff$ which drives the dynamics of LLG couples LLG to other stationary or time-dependent PDEs; see, e.g.,~\cite{Carbou1998} for the coupling of LLG with the full Maxwell system,~\cite{Garcia-Cervera2007} for the electron spin diffusion in ferromagnetic multilayers, or~\cite{Carbou2011} for LLG with magnetostriction. In the case that the effective field involves stationary PDEs only (e.g., $\heff$ consists of exchange field, anisotropy field, applied exterior field, and self-induced stray field), the numerical analysis of the tangent plane integrator of~\cite{Alouges2008} has been generalized in~\cite{AKT2012,Bruckner2014}, where the lower-order contributions are treated explicitly in time by means of a forward Euler step.
It is proved that this preserves unconditional convergence. In~\cite{bpp} and~\cite{ellg,lppt}, the tangent plane integrator is adapted to the coupling of LLG with the full Maxwell system resp.\ the eddy current formulation. The works~\cite{bppr} and~\cite{Abert2014} extend the tangent plane integrator  to LLG with magnetostriction resp.\ LLG with spin diffusion interaction. Throughout,~\cite{bpp,ellg,lppt,bppr,Abert2014} prove unconditional convergence of the overall integrator. 
Moreover, one general theme of~\cite{bpp,lppt,bppr,Abert2014} is that the time marching scheme decouples the integration of LLG and the coupled PDE, so that ---despite the possibly nonlinear coupling~\cite{bppr,Abert2014}--- only two linear systems have to be solved per time-step. Moreover,~\cite{Abert2014} proves that the nodal projection step of the original tangent plane integrator~\cite{Alouges2008} can be omitted without losing unconditional convergence. For this projection-free variant of the tangent plane integrator, the recent work~\cite{FT17} also proves strong $\boldsymbol{H}^1$-convergence towards strong solutions.

As far as the midpoint scheme from~\cite{Bartels2006} is concerned, the work~\cite{Banas2008} provides an extended scheme for the Maxwell--LLG system. Even though the decoupling of the nonlinear LLG equation and the linear Maxwell system appears to be of interest for a time-marching scheme, the analysis of~\cite{Banas2008} treats only the full nonlinear system in each time-step.

The present work transfers ideas and results from~\cite{AKT2012,Bruckner2014} for the tangent plane integrator to the midpoint scheme. We prove that lower-order terms can be treated explicitly in time. This dramatically  lowers the computational work to solve the nonlinear system in each time-step of the midpoint scheme. Unlike~\cite{AKT2012,Bruckner2014}, however, the effective treatment of the lower-order terms requires an explicit two-step method (instead of the simple forward Euler method) to preserve the second-order convergence of the midpoint scheme. We prove that such an approach based on the Adams--Bashforth scheme guarantees unconditional convergence and remains formally of second-order in time. As an application of the proposed general framework, we discuss the discretization of the extended form of LLG \cite{zl2004,tnms2005} which is used to describe the current driven motion of domain walls.

\bigskip

\section{Model problem and discretization}
This section states the Gilbert formulation of LLG and extends the notion of a weak solution from~\cite{Alouges1992} to the present situation. Then, we introduce the notation for our finite element discretization and formulate the numerical integrator. Throughout, we employ standard Lebesgue and Sobolev spaces $L^2(\Omega)$ resp.\ $H^1(\Omega)$. For any Banach space $B$, we let $\boldsymbol{B}:= B^3$, e.g., $\LL{2}{\Omega} := (\L{2}{\Omega})^3$.

\subsection{Model problem}\label{subsection:model}
For a bounded Lipschitz domain $\Omega \subset \R^3$, initial data $\mm^0 \in \HH{1}{\Omega}$, final time $T>0$, and the Gilbert damping constant $\alpha>0$, the Gilbert form of LLG reads
\begin{subequations}\label{eq:LLG}
\begin{align}
\partial_t \mm &= - \mm \times \heff + \alpha\,\mm \times \partial_t \mm & &\textrm{ in } \Omega_T := \left(0,T\right) \times \Omega, \\
\partial_{\mathbf{n}} \mm &= \0 && \textrm{ on } \left(0,T\right) \times \partial \Omega, \\
\mm\l0r &= \mm^0 && \textrm{ in } \Omega.
\end{align}
\end{subequations}
With $\Cex>0$, $\ff: \R^3 \rightarrow \R^3$, and $\boldsymbol{\pi}: \HH{1}{\Omega} \cap \LL{\infty}{\Omega} \rightarrow \LL{2}{\Omega}$, the effective field reads 
\begin{equation} \label{eq:effective}
\heff := \Cex \Delta \mm \ + \ \ppi{\mm} + \ff;
\end{equation}
see Theorem~\ref{theorem:mainresult} for further assumptions on $\ppi{\cdot}$ and $\ff$. With the $\Ltwoshort$-scalar product $\prod{\vvarphi}{\ppsi} := \IntSet{\Omega} \vvarphi \cdot \ppsi\,\d{x}$ for all $\vvarphi,\ppsi \in \LL{2}{\Omega}$, consider the bulk energy
\begin{align}\label{eq:energyfunctional}
\EE{\mm}{\ff} := \frac{\Cex}{2} \norm{ \nabla \mm }{\LL{2}{\Omega}}^2 - \frac{1}{2} \prod{\ppi{\mm}}{\mm} - \prod{\ff}{\mm}.
\end{align}
With the convention $\prod{\mm \times \nabla  \mm}{\nabla \vvarphi} := \sum_{\ell=1}^3\prod{\mm \times \partial_{x_\ell} \mm}{\partial_{x_\ell} \vvarphi}$, we follow~\cite{Alouges1992} for the 
definition of a weak solution to~\eqref{eq:LLG}. Note that the variational formulation~\eqref{eq:variational} is just the weak formulation of~\eqref{eq:LLG} after integration by parts.

\begin{definition}\label{def:weak}
A function $\mm$ is a weak solution to~\eqref{eq:LLG} if the following properties \ref{item:weak1}--\ref{item:weak3} are satisfied: 
\begin{enumerate}[label=\rm{(\roman*)}]
\item \label{item:weak1} \label{cond:norm} $\mm \in \HH{1}{\Omega_T}$ and $\abs{\mm} = 1$ almost everywhere in $\Omega_T$;
\item \label{item:weak2} $\mm \lr{0} = \mm^0$ in the sense of traces;
\item \label{item:weak4} $\mm$ has bounded energy in the sense that there exists a constant $C>0$, which depends only on $\mm^0$, $\ppi{\cdot}$, and $\ff$, such that, for almost all $\tau \in (0,T)$, it holds that
\begin{align*}
\norm{\nabla \mm(\tau)}{\LL{2}{\Omega}}^2 + \Int{0}{\tau} \norm{\partial_t \mm}{\LL{2}{\Omega}}^2 \d{t} \leq C < \infty;
\end{align*}
\item \label{item:weak3} for all $\vvarphi \in \HH{1}{\Omega_T}$, it holds that
\begin{align}\label{eq:variational}
\begin{split}
\Int{0}{T} \prod{\partial_t \mm}{\vvarphi} \d{t} &= 
\Cex \Int{0}{T} \prod{\mm \times \nabla  \mm}{\nabla \vvarphi} \d{t} - \Int{0}{T} \prod{\mm  \times \ppi{\mm}}{\vvarphi} \d{t}\\
& \qquad -\Int{0}{T} \prod{\mm \times \ff}{\vvarphi} \d{t}   + 
\alpha \Int{0}{T} \prod{\mm \times \partial_t \mm}{\vvarphi} \d{t}.
\end{split}
\end{align}
\end{enumerate}
Moreover, $\mm$ is a physical weak solution if, additionally, it holds that
\begin{enumerate}[label=\rm{(\roman*)}]
\setcounter{enumi}{4}
\item \label{item:weak5} for almost all $\tau \in \left(0,T\right)$, it holds that
\begin{align}\label{eq:energyestimate}
\mathcal{E} \left(\mm \lr{\tau},\ff \lr{\tau} \right) + 
\alpha \Int{0}{\tau} \left\Vert \partial_t \mm \right\Vert_{\LL{2}{\Omega}}^2 \d{t} + 
\Int{0}{\tau} \prod{\partial_t \ff}{\mm} \d{t} \leq \mathcal{E} \left(\mm^0 ,\ff \lr{0} \right).
\end{align}
\end{enumerate}
\end{definition}

\subsection{Spatial discretization} \label{subsection:notation}
\def\Cmesh{C_{\rm mesh}}
Let $\Trian$ be a quasi-uniform triangulation of $\Omega$ into compact tetrahedra $K \in \Trian$, i.e., with the corresponding (global) mesh-size $h>0$, it holds that
\begin{align}
\Cmesh^{-1} h \leq \abs{ K }^{1/3} \leq \diam{K} \leq \Cmesh h \quad \textrm{for all } K \in \Trian.
\end{align}
Define the space of $\Trian$-piecewise affine, globally continuous functions
\begin{align*}
V_h := \left\{ v_h \in C \lr{\overline{\Omega}} : v_h|_{K} \in \mathcal{P}^1 \lr{K} \textrm{ for all } K \in \Trian \right\} \subset \H{1}{\Omega},
\end{align*}
and recall that $\VV_h := (V_h)^3$ denotes the corresponding space of vector fields. Let $\Nh = \left\{ \zz_1, \zz_2, \dots, \zz_N \right\}$ be the set of nodes of $\Trian$.
For $\zz_{\ell} \in \Nh$, let $\phi_{\ell} \in \Sh$ be the nodal basis function, i.e., $\phi_{\ell} \left(\zz_{\ell^{\prime}}\right) = \delta_{\ell,\ell^{\prime}}$ with Kronecker's delta. 
Let $\II_h:\boldsymbol{C}(\overline{\Omega}) \rightarrow \Vh$ be the nodal interpolation
\begin{align}\label{eq:nodalinterpolant}
 (\II_h\ppsi)(x) = \sum_{\ell=1}^N \phi_\ell(x)\ppsi(\zz_\ell)
 \quad\text{for all }\ppsi\in\boldsymbol{C}(\overline\Omega).
\end{align}%
Besides the standard $\Ltwoshort$-product $\prod{\cdot}{\cdot}$, define the approximate $\Ltwoshort$-product
\begin{align}\label{eq:approximateltwo}
\prodh{ \vvarphi }{ \ppsi } := 
\Sum{\ell = 1}{N} \beta_{\ell}\ \vvarphi \left(\zz_{\ell}\right) \cdot \ppsi\left(\zz_{\ell}\right) \textrm{ for all } \vvarphi,\ppsi \in \boldsymbol{C}(\overline{\Omega})
\,\text{ with }\, \beta_{\ell} := \IntSet{\Omega} \phi_{\ell} \d{x}.
\end{align}
With $\norm{\vvarphi_h}{h}^2 := \prodh{ \vvarphi_h }{ \vvarphi_h }$, elementary calculations and scaling arguments (see, e.g. \cite[Lemma~3.9]{Bartels2015}) show that
\begin{align}\label{eq:normhequiv1}
\norm{ \vvarphi_h}{\LL{2}{\Omega}} \leq 
\norm{ \vvarphi_h }{h} \leq \sqrt{5} \norm{ \vvarphi_h }{\LL{2}{\Omega}} \quad \textrm{for all } \vvarphi_h \in \Vh.
\end{align}
\def\Cprod{C_{\rm prod}}%
Moreover, there exists $\Cprod > 0$ which depends only on $|\Omega|$ and $\Cmesh$, such that
\begin{align}\label{eq:normhequiv2}
\abs{ \prodh{\vvarphi_h}{\ppsi_h} - \prod{\vvarphi_h}{\ppsi_h} } \leq 
\Cprod h^2 \norm{ \nabla \vvarphi_h }{\LL{2}{\Omega}} \norm{ \nabla \ppsi_h }{\LL{2}{\Omega}} \quad \textrm{for all } \vvarphi_h,\ppsi_h \in \Vh.
\end{align}
Define the discrete Laplacian $\Delta_h: \HH{1}{\Omega} \rightarrow \Vh$ via
\begin{align}\label{eq:deltah}
\prodh{\Deltah{ \vvarphi } }{\ppsi_h} = - \prod{\nabla \vvarphi}{\nabla \ppsi_h} \quad \textrm{for all } \ppsi_h \in \Vh
\end{align}
and similarly $\Ppi_h : \LL{2}{\Omega} \rightarrow \Vh$ via
\begin{align}\label{eq:Ppih}
\prodh{ \Ppih{ \vvarphi } }{\ppsi_h} = \prod{\vvarphi}{\ppsi_h} \quad \textrm{for all } \ppsi_h \in \Vh.
\end{align}
Note that $\prodh{ \Ppih{ \vvarphi } }{\ppsi} = \prodh{ \Ppih{ \vvarphi } }{\II_h\ppsi}
= \prod{ { \vvarphi } }{\II_h\ppsi}$ for all $\vvarphi,\ppsi \in \boldsymbol{C}(\overline{\Omega})$.

\subsection{Temporal discretization} 
Consider uniform time-steps $t_j := jk$ for $j=0,\dots,M$. Let $k:= T/M$ be the time-step size. 
For a Banach space $\boldsymbol B$ and a sequence $\left( \vvarphi^i \right)_{i=-1}^M$ in $\boldsymbol B$, define the mean-value and the discrete time-derivative by
\begin{align}\label{eq:discreteobjects}
\mid{\vvarphi}{i}:= \frac{\vvarphi^{i+1} + \vvarphi^i}{2} \quad \textrm{and} \quad \dt{\vvarphi}{i} := \frac{\vvarphi^{i+1} - \vvarphi^i}{k} \quad \textrm{for} \quad i = 0,\dots,M-1.
\end{align}
For $t \in [t_i,t_{i+1})$, define
\begin{subequations}\label{eq:discretefunctions}
\begin{align}
& \vvarphi_{k}^{=} \ltr := \vvarphi^{i-1}, \quad
\vvarphi_{k}^-\ltr := \vvarphi^i, \quad 
\vvarphi_{k}^+\ltr := \vvarphi^{i+1}, \quad 
\overline{\vvarphi}_{k}\ltr := \mid{\vvarphi}{i}, \textrm{ and } & \\
& \vvarphi_{k}\ltr := \vvarphi^{i+1} \frac{t-t_i}{t_{i+1}-t_i} + \vvarphi^i \frac{t_{i+1}-t}{t_{i+1}-t_i}. &
\end{align}
\end{subequations}
Note that $\vvarphi_{k}^{=}, \vvarphi_{k}^{-}, \vvarphi_{k}^{+}, \overline{\vvarphi}_{k} \in \L{2}{0,T;\boldsymbol{B}}$ and $\vvarphi_k \in \H{1}{0,T;\boldsymbol{B}}$ with $\partial_t \vvarphi_k (t) = \dt{\vvarphi}{i}$ for $t \in [t_i,t_{i+1})$.

\subsection{Implicit-explicit midpoint scheme}\label{subsection:extendedmidpoint}
Let $\boldsymbol{\Pi}_h : \Vh \times \Vh \times \Vh \rightarrow \LL{2}{\Omega}$ be an approximation to $\ppi{\cdot}$. For $i = 0,\dots,M-1$, let $\ff_h^{i+\frac12} \in \Vh$ be an approximation of $\ff(t_i+\frac{k}{2})$. 

The following algorithm has first been proposed and analyzed in~\cite{Bartels2006} for vanishing lower-order terms, i.e., $\heff = \Cex \Delta \mm$. This result caught a lot of attention in the literature~\cite{B06,Banas2008,Bavnas2008/09,Bavnas2012}, where lower-order terms are treated implicity in time. Since some of the lower-order terms (e.g., the stray field) are computationally expensive, our formulation of Algorithm~\ref{alg:midpoint} aims to treat these terms explicitly in time.


\begin{algorithm}\label{alg:midpoint}
Input: Approximation $\mm_h^{-1} :=\mm_h^0 \in\Vh$ of initial condition $\mm^0$.
\newline
Loop: For $0\le i\le M-1$, find $\mm_h^{i+1} \in \Vh$ such that, for all $\vvarphi_h \in \Vh$, it holds that
\begin{align}\label{eq:newmidpoint}
\prodh{\dth{\mm}{i}}{\vvarphi_h} =& -\Cex \prodh{ \midh{\mm}{i} \times \Deltah{ \midh{\mm}{i} }}{\vvarphi_h} - 
\prodh{\midh{\mm}{i} \times \Ppih{ \ppih{\mm_h^{i+1}}{\mm_h^i}{ \mm_h^{i-1}} }}{\vvarphi_h} 
\notag\\
&-\prodh{\midh{\mm}{i} \times \Ppih{\halfh{\ff}{i}} }{\vvarphi_h} + 
\alpha \prodh{\midh{\mm}{i} \times \dth{\mm}{i}}{\vvarphi_h}.
\end{align}
Output: Sequence $\mm_h^i$ of approximations to $\mm(t_i)$ for all $i = 0,1,\dots, M$. \qed
\end{algorithm}

We aim to choose $\boldsymbol{\Pi}_h$ such that the scheme is (formally) of second order and explicit in time for $i\ge1$.
If $\boldsymbol{\pi}_h(\ppsi_h) \approx \boldsymbol{\pi}(\ppsi_h)$, we mainly think of the following two choices:
\begin{subequations}\label{eq:different_approaches}
\begin{enumerate}[label=(\roman*)]
\item \label{item:apporoach2} the implicit midpoint rule
\begin{align}\label{eq:approach_mp}
\ppih{\mm_h^{i+1}}{\mm_h^i}{ \mm_h^{i-1}} := \boldsymbol{\pi}_h\Big(\frac{\mm_h^{i+1}+\mm_h^{i}}{2}\Big),
\end{align}
\item \label{item:apporoach1} the explicit Adams--Bashforth two-step method
\begin{align}\label{eq:appproach_ab}
\ppih{\mm_h^{i+1}}{\mm_h^i}{ \mm_h^{i-1}} := \frac32 \boldsymbol{\pi}_{h}(\mm_h^{i}) - \frac12 \boldsymbol{\pi}_{h}(\mm_h^{i-1}).
\end{align}
\end{enumerate}
The natural choice will be the implicit midpoint rule~\eqref{eq:approach_mp} for the first time-step $i=0$ and the explicit Adams--Bashforth method~\eqref{eq:appproach_ab} for all succeeding time-steps $i\ge1$.
We note that the Adams--Bashforth approach~\eqref{eq:appproach_ab} is computationally attractive, since the computation of $\mm_h^{i+1}$ by~\eqref{eq:newmidpoint} does only require the evaluation of $ \boldsymbol{\pi}_h(\mm_h^i)$, but does not involve $ \boldsymbol{\pi}_h(\mm_h^{i+1})$. Formally, however, it preserves the second-order accuracy of the overall integrator (cf. Section~\ref{section:experiment:academic}).
Moreover, the forward Euler scheme (which is employed and analyzed in~\cite{AKT2012,Bruckner2014} for the tangent plane integrator from~\cite{Alouges2008}) reads
\begin{align}\label{eq:approach_ee}
\ppih{\mm_h^{i+1}}{\mm_h^i}{ \mm_h^{i-1}} := \boldsymbol{\pi}_{h}(\mm_h^{i}),
\end{align}
but will generically lead to a reduced first-order convergence.
\end{subequations}

The next proposition transfers \cite[Lemma~3.1]{Bartels2006} from $\heff = \Cex \Delta \mm$ to the present situation. In particular, Algorithm~\ref{alg:midpoint} is well-defined. 

\begin{proposition}\label{prop:solvable}
Given $\mm_h^i \in \Vh$, the variational formulation~\eqref{eq:newmidpoint} admits a solution $\mm_h^{i+1} \in \Vh$. The latter satisfies $\abs{\mm_h^{i+1} \lr{\zz_{\ell}} } = \abs{\mm_h^{i} \lr{\zz_{\ell}} }$ for all $\zz_{\ell} \in \Nh$.
In particular, it holds that $\norm{\mm_h^i}{h} = \norm{\mm_h^0}{h}$ as well as $\norm{\mm_h^i}{\LL{\infty}{\Omega}} = \norm{\mm_h^0}{\LL{\infty}{\Omega}}$.
\end{proposition}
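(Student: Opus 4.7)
The plan is to separate the two claims. Assuming that $\mm_h^{i+1} \in \Vh$ solves~\eqref{eq:newmidpoint}, I would first establish the nodewise identity $|\mm_h^{i+1}(\zz_\ell)| = |\mm_h^i(\zz_\ell)|$ by testing with $\vvarphi_h := \phi_\ell\,\midh{\mm}{i}(\zz_\ell) \in \Vh$, i.e., the scalar hat function $\phi_\ell$ multiplied by the constant vector $\midh{\mm}{i}(\zz_\ell) \in \R^3$. Every cross-product term on the right-hand side of~\eqref{eq:newmidpoint} is of the form $\prodh{\midh{\mm}{i}\times\ww_h}{\vvarphi_h}$ for some $\ww_h \in \Vh$, which by~\eqref{eq:approximateltwo} collapses to $\beta_\ell \bigl(\midh{\mm}{i}(\zz_\ell)\times\ww_h(\zz_\ell)\bigr)\cdot\midh{\mm}{i}(\zz_\ell) = 0$, and the same cancellation annihilates the damping term. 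What remains is $\beta_\ell\,\dth{\mm}{i}(\zz_\ell)\cdot\midh{\mm}{i}(\zz_\ell) = 0$, and since $\beta_\ell > 0$, expanding the dot product yields $|\mm_h^{i+1}(\zz_\ell)|^2 - |\mm_h^i(\zz_\ell)|^2 = 0$.

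Existence of $\mm_h^{i+1}$ then reduces, via the change of unknowns $\vv_h := \midh{\mm}{i} = (\mm_h^{i+1}+\mm_h^i)/2$, to finding a zero of a continuous map $\boldsymbol{F}:\Vh \to \Vh$ obtained by transposing the right-hand side of~\eqref{eq:newmidpoint} to the left and replacing $\dth{\mm}{i}$ by $2(\vv_h - \mm_h^i)/k$. The map $\boldsymbol{F}$ is polynomial (hence continuous) in the nodal coefficients of $\vv_h$: in the Adams--Bashforth variant~\eqref{eq:appproach_ab}, $\boldsymbol{\Pi}_h$ depends only on the frozen data $\mm_h^i$, $\mm_h^{i-1}$, while in the midpoint variant~\eqref{eq:approach_mp}, $\boldsymbol{\pi}_h(\vv_h)$ is continuous in $\vv_h$ by assumption. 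Testing with $\vvarphi_h = \vv_h$, the same nodewise argument kills every cross-product contribution, including the damping one (since $\vv_h \times \vv_h = \0$ pointwise), and leaves
\begin{equation*}
\prodh{\boldsymbol{F}(\vv_h)}{\vv_h} = \frac{2}{k}\bigl(\norm{\vv_h}{h}^2 - \prodh{\mm_h^i}{\vv_h}\bigr) \ge \frac{2}{k}\norm{\vv_h}{h}\bigl(\norm{\vv_h}{h} - \norm{\mm_h^i}{h}\bigr).
\end{equation*}
This is strictly positive on the $\norm{\cdot}{h}$-sphere of radius $R > \norm{\mm_h^i}{h}$, so the standard Brouwer-type corollary on zeros of continuous vector fields (in the spirit of~\cite[Lemma~3.1]{Bartels2006}) furnishes $\vv_h \in \Vh$ with $\boldsymbol{F}(\vv_h)=\0$, and $\mm_h^{i+1} := 2\vv_h - \mm_h^i \in \Vh$ then solves~\eqref{eq:newmidpoint}.

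The remaining global identities follow by induction on $i$: $\norm{\mm_h^i}{h}^2 = \sum_\ell \beta_\ell |\mm_h^i(\zz_\ell)|^2$ is preserved because each $|\mm_h^i(\zz_\ell)|$ is, and $\norm{\mm_h^i}{\LL{\infty}{\Omega}} = \max_\ell |\mm_h^i(\zz_\ell)|$ since piecewise affine continuous fields on tetrahedra attain their extrema at vertices. The main obstacle is the nonlinearity of~\eqref{eq:newmidpoint} in $\mm_h^{i+1}$ (already through $\midh{\mm}{i} \times \Deltah{\midh{\mm}{i}}$, and, in the midpoint variant, also through $\boldsymbol{\pi}_h$), which precludes an elementary linear-algebra inversion; however, the cancellations obtained above are precisely what turn the Brouwer coercivity estimate into a one-line computation and, crucially, impose no CFL-type restriction on $k$ and $h$.
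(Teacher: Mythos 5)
Your proof is correct and follows essentially the same route as the paper: the same change of unknown $\vv_h = \midh{\mm}{i}$, the same Brouwer coercivity estimate $\prodh{\FF(\vv_h)}{\vv_h}\ge \frac{2}{k}\norm{\vv_h}{h}(\norm{\vv_h}{h}-\norm{\mm_h^i}{h})$ on a sphere, and the same node-localized test function exploiting $(\aa\times\boldsymbol{b})\cdot\aa=0$ to collapse every cross-product term to zero, leaving $|\mm_h^{i+1}(\zz_\ell)|^2 = |\mm_h^i(\zz_\ell)|^2$. The only cosmetic difference is that you test~\eqref{eq:newmidpoint} directly with $\phi_\ell\,\midh{\mm}{i}(\zz_\ell)$ rather than testing $\prodh{\FF_{hk}^i(\ppsi_h)}{\ppsi_h(\zz_\ell)\phi_\ell}$ as the paper does, but since $\ppsi_h=\midh{\mm}{i}$ at the zero of $\FF$, these are identical computations.
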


\begin{proof}
Let $\mm_h^i \in \Vh$. Let $\II_h$ be the nodal interpolant~\eqref{eq:nodalinterpolant}. Define $\FF_{hk}^i: \Vh \rightarrow \Vh$ by
\begin{align}\label{eq:defFFhki}
\FF_{hk}^{i} ( \vvarphi_h )  &:= \frac{2}{k} ( \vvarphi_h - \mm_h^i )
 + \II_h \left( \vvarphi_h \times \Ttetha(\vvarphi_h,\mm_h^i,\mm_h^{i-1}) \right) \textrm{ for all } \vvarphi_h \in \Vh ,
\end{align}
where
\begin{align*}
\begin{split}
\Ttetha(\vvarphi_h,\mm_h^i,\mm_h^{i-1}) &:= \Cex \Deltah{ \vvarphi_h} + \Ppih{ \boldsymbol{\Pi}_h ( 2 \vvarphi_h - \mm_h^i , \mm_h^i, \mm_h^{i-1} ) } \\
& \quad + \Ppih{ \halfh{\ff}{i} } - \frac{2\alpha}{k} ( \vvarphi_h - \mm_h^i ).
\end{split}
\end{align*}
Let $\ppsi_h \in \Vh$ and suppose $\FF_{hk}^i ( \ppsi_h ) = \0$. Then, direct calculation shows that $\mm_h^{i+1} := 2\ppsi_h - \mm_h^i \in \Vh$ solves the variational formulation~\eqref{eq:newmidpoint}. With $\phi_{\ell}$ being the nodal basis function corresponding to some node $\zz_{\ell} \in \Nh$, it holds that
\begin{align*}
0 & = \prodh{\FF_{hk}^{i} ( \ppsi_h )}{\ppsi_h \lr{\zz_{\ell}} \phi_{\ell}} \stackrel{\eqref{eq:approximateltwo},\eqref{eq:defFFhki}}{ = } \frac{2 \beta_{\ell}}{k} ( \ppsi_h ( \zz_{\ell} ) - \mm_h^i ( \zz_{\ell} ) ) \cdot \ppsi_h ( \zz_{\ell} ) \\
& = \frac{\beta_{\ell}}{2k} ( \mm_h^{i+1} (\zz_{\ell}) - \mm_h^i ( \zz_{\ell} ) ) \cdot ( \mm_h^{i+1} ( \zz_{\ell} ) + \mm_h^i ( \zz_{\ell} ) ) 
= \frac{\beta_{\ell}}{2k} ( | \mm_h^{i+1} ( \zz_{\ell} ) |^2 - | \mm_h^{i} ( \zz_{\ell} ) |^2 ).
\end{align*}
This proves $\abs{\mm_h^{i+1} ( \zz_{\ell} )} = \abs{\mm_h^{i} ( \zz_{\ell} )}$ for all $\zz_{\ell} \in \Nh$.
In particular, the definition~\eqref{eq:approximateltwo} of $\prodh{\cdot}{\cdot}$ yields $\norm{\mm_h^{i+1}}{h} = \norm{\mm_h^{i}}{h}$. 
Since the affine functions $\mm_h^i,\mm_h^{i+1}$ attain their $\boldsymbol{L}^\infty$ norm in one of the vertices $\zz_\ell\in\NN_h$, we also conclude $\norm{\mm_h^{i+1}}{\boldsymbol{L}^\infty(\Omega)} = \norm{\mm_h^{i}}{\boldsymbol{L}^\infty(\Omega)}$.

It remains to show that there exists 
$\ppsi_h \in \Vh$ with $\FF_{hk}^i \lr{\ppsi_h} = \0$. For $\vvarphi_h \in \Vh$, it holds that
\begin{align*}
\prodh{\FF_{hk}^{i} ( \vvarphi_h )}{\vvarphi_h} = \frac{2}{k} ( \norm{ \vvarphi_h }{h}^2 - \prodh{\mm_h^i}{\vvarphi_h} ) &\geq 
\frac{2}{k} \norm{ \vvarphi_h }{h} ( \norm{ \vvarphi_h }{h} - \norm{ \mm_h^i }{h} ).
\end{align*}
For $r^2 > \Sum{\ell = 0}{M} \beta_{\ell} \abs{\mm_h^i (\zz_{\ell})}^2 \stackrel{\eqref{eq:approximateltwo}}{=} \norm{ \mm_h^i }{h}^2$, it holds that
\begin{align*}
\prodh{\FF_{hk}^{i} ( \vvarphi_h )}{\vvarphi_h} \geq 0 \textrm{ for all } \vvarphi_h \in \Vh  \textrm{ with } \norm{ \vvarphi_h }{h} = r.
\end{align*}
Therefore, an application of the Brouwer fixed-point theorem (resp.\ its corollary \cite[Section 9.1, p.529]{Evans2010}) yields the existence of $\ppsi_h \in \Vh$ with $\norm{ \ppsi_h }{h} \leq r$ and $\FF_{hk}^i ( \ppsi_h ) = \0$. This concludes the proof. 
\end{proof}

\section{Convergence theorem}\label{section:mainresult}

\subsection{Statement and discussion of the main theorem}
The following theorem is the main result of the present work.

\begin{theorem}\label{theorem:mainresult}
{\rm (a)} Let $\mm^0 \in \HH{1}{\Omega}$ with $\abs{\mm^0} = 1$ a.e.\ in $\Omega$. Let $\mm_h^0 \in \Vh$ satisfy
\begin{subequations}\label{eq:condmzero}
\begin{align}
& \label{eq:mzeroweak} \mm_h^0 \rightharpoonup \mm^0 \quad \text{weakly in } \HH{1}{\Omega} \quad \textrm{as } h \rightarrow 0, \\
& \label{eq:mzerobounded} \norm{ \mm_h^0 }{\LL{\infty}{\Omega}} \leq C_0,
\end{align}
\end{subequations}
where $C_0$ does not depend on $h>0$. Let the approximation operator $\boldsymbol{\Pi}_h:\Vh^3 \rightarrow \LL{2}{\Omega}$ be stable in the sense that, for all $\vvarphi_h^1,\vvarphi_h^2,\vvarphi_h^3 \in \Vh$,
it holds that
\begin{align}\label{eq:pihbounded}
\norm{ \boldsymbol{\Pi}_h \left(\vvarphi_h^1,\vvarphi_h^2,\vvarphi_h^3\right) }{\LL{2}{\Omega}} \leq C_{\boldsymbol{\pi}} 
(1+\max_{i=1,2,3} \norm{ \vvarphi_h^i }{\LL{\infty}{\Omega}}) \Sum{i=1}{3} \norm{ \vvarphi_h^i }{\HH{1}{\Omega}},
\end{align}
where $C_{\boldsymbol{\pi}}$ does not depend on $h>0$. Define $\widehat{\ff}_{hk} \in \L{2}{0,T;\HH{1}{\Omega}}$ via $\widehat{\ff}_{hk} \left(t\right) := \halfh{\ff}{i}$ for $t \in [t_i,t_{i+1})$ and suppose that 
\begin{align}\label{eq:ffhkweak}
\widehat{\ff}_{hk} \rightharpoonup \ff \textrm{ weakly in } \LL{2}{\Omega_T} \ashkzero.
\end{align}
Then, \textashkzero \ unconditionally, there exists a subsequence of the postprocessed output $\mm_{hk}$  of Algorithm~\ref{alg:midpoint} which converges weakly in $\HH{1}{\Omega_T}$, towards some limit $\mm\in\HH{1}{\Omega_T}$, which additionally satisfies $\mm\in L^\infty(0,T;\HH{1}{\Omega})$ with $|\mm|=1$ \,a.e. in $\Omega_T$. Moreover, there exists a constant $C>0$ such that the same subsequence guarantees strong convergence $\mm_{hk}^\star\to\mm$ in $\LL{2}{\Omega_T}$ and uniform boundedness $\norm{\mm_{hk}^\star}{\LL\infty{\Omega_T}}\le C$ for all $\mm_{hk}^\star\in\{\mm_{hk}, \mm_{hk}^+,\mm_{hk}^-,\overline{\mm}_{hk}, \mm_{hk}^{=} \}$.

{\rm(b)} In addition, suppose that we can extract a further subsequence such that 
\begin{align}\label{eq:pihweak}
 \boldsymbol{\Pi}_h\big(\mm_{hk}^+,\mm_{hk}^-,\mm_{hk}^{=}\big)
 \rightharpoonup \ppi{\mm}
 \text{ weakly in }\LL{2}{\Omega_T} \text{ as $(h,k)\to(0,0)$}.
\end{align}
Then, the limit $\mm$ from~{\rm(a)} is a weak solution of LLG according to
Definition~\ref{def:weak}~\ref{item:weak1}--\ref{item:weak3}.

{\rm (c)} In addition, let $\ff \in \ConeL$ and $\boldsymbol{\pi}: \LL{2}{\Omega} \rightarrow \LL{2}{\Omega}$ be continuous, linear, and self-adjoint. Let~\eqref{eq:mzeroweak} and~\eqref{eq:ffhkweak}--\eqref{eq:pihweak} hold with strong convergence. Then, $\mm$ from~{\rm(a)} is even a physical weak solution in the sense of Definition~\ref{def:weak}~\ref{item:weak1}--\ref{item:weak5}.
\end{theorem}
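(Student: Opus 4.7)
My plan follows the three-step strategy of~\cite{Bartels2006} (a priori estimates, compactness, identification of the limit), adapted to the explicit treatment of the lower-order terms. \emph{Part (a).} I test~\eqref{eq:newmidpoint} twice: once with $\vvarphi_h=\dth{\mm}{i}$, and once with $\vvarphi_h$ equal to the discrete effective field $X^i := \Cex\Deltah{\midh{\mm}{i}} + \Ppih{\ppih{\mm_h^{i+1}}{\mm_h^i}{\mm_h^{i-1}}} + \Ppih{\halfh{\ff}{i}}\in\Vh$. Both tests kill several cross products nodewise via $(\boldsymbol{a}\times\boldsymbol{b})\cdot\boldsymbol{b}=0$, and combining them through the triple product identity yields $\prodh{\dth{\mm}{i}}{X^i} = \alpha\norm{\dth{\mm}{i}}{h}^2$. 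Unfolding the left-hand side by means of~\eqref{eq:deltah}--\eqref{eq:Ppih} together with the telescoping $\prod{\nabla\dth{\mm}{i}}{\nabla\midh{\mm}{i}} = \tfrac{1}{2k}(\norm{\nabla\mm_h^{i+1}}{\LL{2}{\Omega}}^2-\norm{\nabla\mm_h^i}{\LL{2}{\Omega}}^2)$ gives the discrete energy identity
\begin{align*}
\tfrac{\Cex}{2}\bigl(\norm{\nabla\mm_h^{i+1}}{\LL{2}{\Omega}}^2-\norm{\nabla\mm_h^i}{\LL{2}{\Omega}}^2\bigr) + \alpha k\norm{\dth{\mm}{i}}{h}^2
= k\prod{\dth{\mm}{i}}{\ppih{\mm_h^{i+1}}{\mm_h^i}{\mm_h^{i-1}}+\halfh{\ff}{i}}.
\end{align*}
Using~\eqref{eq:pihbounded}, the nodal bound $\norm{\mm_h^i}{\LL{\infty}{\Omega}}\leq C_0$ from Proposition~\ref{prop:solvable} and~\eqref{eq:mzerobounded}, Young's inequality, and a discrete Gronwall argument, I obtain $\max_j\norm{\nabla\mm_h^j}{\LL{2}{\Omega}}^2 + \sum_i k\norm{\dth{\mm}{i}}{h}^2 \leq C$; via~\eqref{eq:normhequiv1} this translates into uniform bounds of $\mm_{hk}$ in $L^\infty(0,T;\HH{1}{\Omega})\cap\HH{1}{\Omega_T}\cap\LL{\infty}{\Omega_T}$. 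Banach--Alaoglu plus Aubin--Lions furnishes the claimed weak and strong convergences, while the variants $\mm_{hk}^\star$ differ from $\mm_{hk}$ by $O(k)$ in $\LL{2}{\Omega_T}$. The identity $|\mm|=1$ a.e.\ follows from Proposition~\ref{prop:solvable} combined with the classical bound $\norm{|\mm_h^j|^2-\II_h(|\mm_h^j|^2)}{L^1(\Omega)}\lesssim h^2\norm{\nabla\mm_h^j}{\LL{2}{\Omega}}^2$ and strong $\LL{2}{\Omega_T}$-convergence.

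\emph{Part (b).} I integrate~\eqref{eq:newmidpoint} over $(0,T)$, test with $\vvarphi_h(t) = \II_h\vvarphi(t)$ for $\vvarphi\in\CinftyTthree$, and pass term by term to the limit on the subsequence extracted in~(a). The time-derivative and Gilbert terms follow from strong $\LL{2}{\Omega_T}$-convergence of $\overline\mm_{hk}$ together with weak convergence of $\partial_t\mm_{hk}$; the discrepancy between $\prodh{\cdot}{\cdot}$ and $\prod{\cdot}{\cdot}$ is absorbed via~\eqref{eq:normhequiv2} and the uniform $\HH{1}{\Omega}$-bound. The lower-order cross product $\prodh{\overline\mm_{hk}\times\Ppih{\boldsymbol{\Pi}_h(\cdots)}}{\II_h\vvarphi}$ is handled by~\eqref{eq:Ppih}, the assumed weak convergence~\eqref{eq:pihweak}, and strong $\LL{2}{\Omega_T}$-convergence of $\overline\mm_{hk}$; the external-field contribution is analogous via~\eqref{eq:ffhkweak}. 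A density argument extends the class of admissible test functions to $\HH{1}{\Omega_T}$.

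The main technical obstacle is the exchange cross product: using the cyclic identity at the nodes and then~\eqref{eq:deltah},
\begin{align*}
\prodh{\overline\mm_{hk}\times\Deltah{\overline\mm_{hk}}}{\II_h\vvarphi}
= \prodh{\Deltah{\overline\mm_{hk}}}{\II_h(\II_h\vvarphi\times\overline\mm_{hk})}
= -\prod{\nabla\overline\mm_{hk}}{\nabla\II_h(\II_h\vvarphi\times\overline\mm_{hk})}.
\end{align*}
Splitting $\nabla\II_h(\II_h\vvarphi\times\overline\mm_{hk}) = \nabla(\vvarphi\times\overline\mm_{hk}) + \rho_h$ with a nodal interpolation error $\rho_h\to 0$ in $\LL{2}{\Omega_T}$ (where quasi-uniformity of $\Trian$ enters through standard inverse estimates on products of piecewise-affine functions), observing that the symmetric contribution $\sum_\ell\prod{\partial_{x_\ell}\overline\mm_{hk}}{\vvarphi\times\partial_{x_\ell}\overline\mm_{hk}}$ vanishes pointwise, and recognising the remainder $-\prod{\nabla\overline\mm_{hk}}{\nabla\vvarphi\times\overline\mm_{hk}}$ as the duality pairing of a weakly-convergent gradient against a strongly-convergent test, the cyclic identity gives the limit $-\prod{\mm\times\nabla\mm}{\nabla\vvarphi}$, so that~\eqref{eq:variational} is reproduced.

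\emph{Part (c).} Summing the discrete energy identity from $i=0$ to $i=j-1$, linearity and self-adjointness of $\ppi$ telescope the contribution $k\sum_i\prod{\dth{\mm}{i}}{\ppih{\mm_h^{i+1}}{\mm_h^i}{\mm_h^{i-1}}}$ into $\tfrac{1}{2}(\prod{\ppi(\mm_h^j)}{\mm_h^j}-\prod{\ppi(\mm_h^0)}{\mm_h^0})$, up to error terms that vanish under the strong version of~\eqref{eq:pihweak}. A discrete summation by parts rewrites $k\sum_i\prod{\dth{\mm}{i}}{\halfh{\ff}{i}}$ as $\prod{\mm_h^j}{\halfh{\ff}{j-1}} - \prod{\mm_h^0}{\halfh{\ff}{0}} - \sum_{i=1}^{j-1}\prod{\mm_h^i}{\halfh{\ff}{i}-\halfh{\ff}{i-1}}$, whose limit produces $\prod{\mm(\tau)}{\ff(\tau)} - \prod{\mm^0}{\ff(0)} - \int_0^\tau\prod{\mm}{\partial_t\ff}\d{t}$ thanks to $\ff\in\ConeL$. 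Applying $\liminfhk$, using weak lower semicontinuity of $\norm{\nabla\mm(\tau)}{\LL{2}{\Omega}}^2$ and $\int_0^\tau\norm{\partial_t\mm}{\LL{2}{\Omega}}^2\d{t}$ together with the strong convergences of $\mm_h^0$ and $\widehat\ff_{hk}$, yields~\eqref{eq:energyestimate} for a.e.\ $\tau\in(0,T)$.
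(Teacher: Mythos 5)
Your proposal follows the same strategy as the paper: a discrete energy identity obtained by testing~\eqref{eq:newmidpoint} twice (once with $\dth{\mm}{i}$, once with the discrete effective field $\hh_h^{i+\frac12}$), a discrete Gronwall argument to get uniform bounds, compactness to extract the subsequence, identification of the limit by passing term-by-term in the weak form, and a discrete energy inequality for part~(c). Your cyclic-identity/$\Delta_h$ manipulation for the exchange cross product, your use of~\eqref{eq:Ppih} and the assumed weak convergence~\eqref{eq:pihweak} for the lower-order terms, and your telescoping of $k\sum_i\prod{\dth{\mm}{i}}{\ppi{\midh{\mm}{i}}}$ via linearity and self-adjointness of $\boldsymbol\pi$ all reproduce the paper's argument. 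The one genuine variation is in~(c): you apply Abel summation directly to $k\sum_i\prod{\dth{\mm}{i}}{\halfh{\ff}{i}}$, whereas the paper tracks the energy difference $\EE{\mm_h^{j+1}}{\ff^{j+1}}-\EE{\mm_h^j}{\ff^j}$ and factors out the discrete time derivative of $\ff$; the two manipulations are algebraically equivalent and lead to the same terms.

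Two points of rigor are glossed over. First, in part~(b) you justify the $\prodh{\cdot}{\cdot}$-versus-$\prod{\cdot}{\cdot}$ discrepancy for the term $\prodh{\partial_t\mm_{hk}}{\vvarphi_h}$ by invoking~\eqref{eq:normhequiv2} and ``the uniform $\HH{1}{\Omega}$-bound''; but $\partial_t\mm_{hk}$ is bounded only in $\LL{2}{\Omega_T}$, not in $L^2(0,T;\HH{1}{\Omega})$. To make~\eqref{eq:normhequiv2} deliver an $o(1)$ estimate one must first use an inverse estimate $\norm{\nabla\partial_t\mm_{hk}}{\LL{2}{\Omega}}\lesssim h^{-1}\norm{\partial_t\mm_{hk}}{\LL{2}{\Omega}}$, which turns the $h^2$-factor into an $h$-factor; that step should be stated. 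Second, in part~(c) the conclusion ``yields~\eqref{eq:energyestimate} for a.e.\ $\tau\in(0,T)$'' after applying $\liminf$ requires care, since weak lower semicontinuity of $\norm{\nabla\cdot}{\LL{2}{\Omega}}^2$ holds for integrals in time, not pointwise. The paper integrates the inequality over an arbitrary measurable $I\subseteq[0,T]$, passes to the $\liminf$ there, and then invokes the arbitrariness of $I$ to obtain the pointwise a.e.\ estimate; your sketch omits this device. Similarly, identifying $\prod{\mm_h^j}{\halfh{\ff}{j-1}}$ with $\prod{\mm(\tau)}{\ff(\tau)}$ in the limit is not a pointwise statement and needs the same localization. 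Neither of these gaps indicates a wrong idea; both are closed by the standard devices the paper uses.
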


\begin{remark}
If LLG admits a unique weak solution $\mm \in \HH{1}{\Omega_T}$ in the sense of Definition~\ref{def:weak}~\ref{item:weak1}--\ref{item:weak3}, then standard arguments prove indeed that all convergences of Theorem \ref{theorem:mainresult} hold for the full sequences $\mm_{hk}^\star\in\{\mm_{hk}, \mm_{hk}^+,\mm_{hk}^-,\overline{\mm}_{hk}, \mm_{hk}^{=} \}$ instead of only subsequences. For the exchange-only formulation of LLG with $\boldsymbol{h}_{\rm eff} = \Cex \Delta\mm$, nonuniqueness of solutions is shown in~\cite{Alouges1992}. However, the recent work~\cite{DS2014} proves a strong-weak uniqueness principle of the solutions of LLG, i.e., if a strong solution exists up to some time $T^{\ast}>0$, it is also the \emph{unique} weak solution up to time $T^{\ast}$.
\end{remark}

\begin{remark}
Unlike~\cite{Bartels2006}, where the fact that $\lvert\mm_h^0(\zz_{\ell})\rvert=1$ for all $\zz_{\ell} \in \Nh$ is used, our proof requires only $\mm_h^0\in\VV_h$.
However, this choice and hence $C_0 = 1$ is allowed and, for instance, met if $\mm_h^0$ is the nodal interpolant of $\mm^0 \in \HH{1}{\Omega} \cap \boldsymbol{C}(\overline{\Omega})$. Unlike~\cite[Appendix A]{Bruckner2014}, the present proof of the energy estimate of Definition~\ref{def:weak}~\ref{item:weak5}, does not require $\partial_t \ff = 0$ and allows weaker stability assumptions on $\boldsymbol{\pi}$. 
\end{remark}%

\begin{remark}
If we suppose the stronger estimate
\begin{align}\label{eq:pihbounded_alt}
\norm{ \boldsymbol{\Pi}_h \left(\vvarphi_{h}^{1},\vvarphi_{h}^{2},\vvarphi_{h}^{3}\right) }{\LL{2}{\Omega}} \leq C_{\boldsymbol{\pi}} 
\Sum{i=1}{3} \norm{ \vvarphi_{h}^{i} }{\HH{1}{\Omega}}
\textrm{ for all } \vvarphi_{h}^{1},\vvarphi_{h}^{2},\vvarphi_{h}^{3} \in \Vh
\end{align}
instead of~\eqref{eq:pihbounded}, then Theorem~\ref{theorem:mainresult} remains valid, even if~\eqref{eq:mzerobounded} fails to hold. To see this, note that~\eqref{eq:mzerobounded} is only used below to derive~\eqref{eq:beforegronwall1} from~\eqref{eq:mhiinfty}. Here, the stronger bound~\eqref{eq:pihbounded_alt} simplifies~\eqref{eq:mhiinfty} and guarantees~\eqref{eq:beforegronwall1} even if~\eqref{eq:mzerobounded} fails. Therefore, Lemma~\ref{lemma:dicreteenergy} and hence also Theorem~\ref{theorem:mainresult} remain valid.
\end{remark}

\begin{remark}\label{remark:consistency}
Suppose that  $\ppi{\cdot}$ satisfies stability in the sense of
\begin{align}\label{eq:pibounded}
 \norm{\ppi{\vvarphi}}{\LL{2}{\Omega}} \le C_\pi\big(1+\norm{\vvarphi}{\LL{\infty}{\Omega}}\big)\norm{\vvarphi}{\HH{1}{\Omega}}
 \quad\text{for all }\vvarphi\in \HH{1}{\Omega} \cap \LL{\infty}{\Omega}.
\end{align}
Suppose that $\boldsymbol{\Pi}_h$ satisfies stability~\eqref{eq:pihbounded} as well as the following consistency condition: Convergence $\vvarphi_{h}^1,\vvarphi_{h}^2,\vvarphi_{h}^3 \rightarrow \vvarphi$ in $\LL{2}{\Omega}$\ as $h \rightarrow 0$ implies
\begin{align}\label{eq:pihweak_alt}
\boldsymbol{\Pi}_h \left(\vvarphi_{h}^1,\vvarphi_{h}^2,\vvarphi_{h}^3\right) \rightharpoonup \ppi{\vvarphi} \textrm{ in } \LL{2}{\Omega}.
\end{align}
Then, the Lebesgue dominated convergence theorem implies~\eqref{eq:pihweak}.
To see this, recall $\mm_{hk}^\star\to\mm$ in $\LL{2}{\Omega_T}$ and $\norm{\mm_{hk}^{\star}}{L^{\infty}(0,T;\boldsymbol{H}^1(\Omega))}  \leq C < \infty$ as $(h,k)\to(0,0)$ for some appropriate subsequence for all $\mm_{hk}^\star\in\{\mm_{hk}, \mm_{hk}^+,\mm_{hk}^-,\overline{\mm}_{hk}, \mm_{hk}^{=} \}$. Since $\LL{2}{\Omega_T} = L^2(0,T;\LL{2}{\Omega})$, we can extract a further subsequence such that $\mm_{hk}^\star(t)\to\mm(t)$ for almost all $t\in[0,T]$. Let $\ppsi\in L^2(0,T;\LL{2}{\Omega})$. Then, assumption~\eqref{eq:pihweak_alt} implies
$$
\prod{\boldsymbol{\pi}{\big(\mm(t)\big)}-\boldsymbol{\Pi}_h\big({\mm_{hk}^+(t)},{\mm_{hk}^-(t)},{\mm_{hk}^=(t)}\big)}{\ppsi(t)}\to0
\quad\text{for almost all }t\in[0,T].
$$
Moreover, assumptions~\eqref{eq:pihbounded} and~\eqref{eq:pibounded} together with the properties of $\mm$ and $\mm_{hk}^\star$ imply
\begin{align*}
&\big|\prod{\boldsymbol{\pi}{\big(\mm(t)\big)}-\boldsymbol{\Pi}_h\big({\mm_{hk}^+(t)},{\mm_{hk}^-(t)},{\mm_{hk}^=(t)}\big)}{\ppsi(t)}\big|
\lesssim \norm{\ppsi(t)}{\LL{2}{\Omega}}
\end{align*}
Since $T<\infty$, the Lebesgue dominated converge theorem implies~\eqref{eq:pihweak}.
Moreover, strong convergence in~\eqref{eq:pihweak_alt} will also result in strong convergence in~\eqref{eq:pihweak}. 
We note that in many relevant situations (see Section~\ref{section:discussion} below), the assumptions~\eqref{eq:pibounded}--\eqref{eq:pihweak_alt} are usually easier to verify than~\eqref{eq:pihweak}.
\end{remark}%

The proof of Theorem~\ref{theorem:mainresult} extends that of \cite{Bartels2006} for $\heff = \Delta \mm$, but additionally employs certain ideas of \cite{Bruckner2014}. Roughly, it consists of the following four steps:
\begin{itemize}
\item 
Show that $\left(\mm_h^i\right)_{i=0}^M$ fulfills a discrete energy identity.
\item 
Successively extract subsequences of the postprocessed output~\eqref{eq:discretefunctions} of Algorithm \ref{alg:midpoint}, namely $\mm_{hk}^+$, $\mm_{hk}^-$, $\overline{\mm}_{hk}$, $\mm_{hk}^{=}$, and $\mm_{hk}$, which simultaneously converge (weakly) to the same limit $\mm\in\HH{1}{\Omega_T}$ \textashkzero.
\item 
Verify that the limit $\mm$ satisfies Definition~\ref{def:weak}~\ref{item:weak1}--\ref{item:weak3}.
\item 
Verify that the limit $\mm$ satisfies the energy estimate of Definition~\ref{def:weak}~\ref{item:weak5}.
\end{itemize}
To simplify our notation and for the rest of this section, we abbreviate 
\begin{align}\label{eq:defppih}
\ppihmi := \ppih{\mm_h^{i+1}}{\mm_h^i}{ \mm_h^{i-1}} \textrm{\quad \textrm{and} \quad} \ppihkm := \ppih{\mm_{hk}^{+}}{\mm_{hk}^{-}}{\mm_{hk}^{=}}.
\end{align}

\subsection{Discrete energy equality and weakly convergent subsequences}
The following discrete energy identity will prove the boundedness of the discrete solutions and will hence allow to extract weakly convergent subsequences.

\begin{lemma}\label{lemma:dicreteenergy}
\rm(a) For $i = 0, \dots, M-1$, it holds that
\begin{align}\label{eq:discreteenergy1}
\frac{\Cex}{2} \dtshort \norm{ \nabla \mm_h^{i+1} }{\LL{2}{\Omega}}^2 
+ \alpha \norm{ \dth{\mm}{i} }{h}^2 
= \prod{ \dth{\mm}{i} }{ { \ppihmi } + {\halfh{\ff}{i}}}.
\end{align}
\rm(b) For $i = 0, \dots, M-1$, it holds that
\begin{align}\label{eq:discreteenergy2}
\begin{split} 
&\frac{\Cex}{2} \norm{ \nabla \mm_h^{i+1} }{\LL{2}{\Omega}}^2 \!+\! 
\alpha k \Sum{j=0}{i} \norm{ \dth{\mm}{j} }{h}^2 
=\frac{\Cex}{2} \norm{ \nabla \mm_h^0 }{\LL{2}{\Omega}}^2
\!+\! k \Sum{j=0}{i} 
\prod{ \dth{\mm}{j}}{ { \ppihmj } \!+\! { \halfh{\ff}{j} }}.
\end{split}
\end{align}
\end{lemma}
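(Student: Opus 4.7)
\textbf{Proof plan for Lemma~\ref{lemma:dicreteenergy}.}

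The plan for part (a) is to extract the energy identity from~\eqref{eq:newmidpoint} by two judicious choices of test function. The key observation is that the quantity
\begin{equation*}
\HH^i := \Cex\,\Deltah{\midh{\mm}{i}}\,+\,\Ppih(\ppihmi)\,+\,\Ppih(\halfh{\ff}{i})
\end{equation*}
belongs to $\Vh$ by the mapping properties of $\Deltah$ and $\Ppih$, and hence is an admissible test function. With this abbreviation, \eqref{eq:newmidpoint} reads
$\prodh{\dth{\mm}{i}}{\vvarphi_h}=-\prodh{\midh{\mm}{i}\times\HH^i}{\vvarphi_h}+\alpha\prodh{\midh{\mm}{i}\times\dth{\mm}{i}}{\vvarphi_h}$.

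First I would test with $\vvarphi_h=\HH^i$. Because $\prodh{\cdot}{\cdot}$ is a nodal quadrature, the triple product $\midh{\mm}{i}(\zz_\ell)\times\HH^i(\zz_\ell)\cdot\HH^i(\zz_\ell)$ vanishes at every node, so the first term on the right drops out, leaving
$\prodh{\dth{\mm}{i}}{\HH^i}=\alpha\prodh{\midh{\mm}{i}\times\dth{\mm}{i}}{\HH^i}$.
Next I would test with $\vvarphi_h=\dth{\mm}{i}$. Now the damping contribution $\prodh{\midh{\mm}{i}\times\dth{\mm}{i}}{\dth{\mm}{i}}$ vanishes pointwise, and cyclic permutation of the scalar triple product gives $\prodh{\midh{\mm}{i}\times\HH^i}{\dth{\mm}{i}}=-\prodh{\midh{\mm}{i}\times\dth{\mm}{i}}{\HH^i}$. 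This yields $\norm{\dth{\mm}{i}}{h}^2=\prodh{\midh{\mm}{i}\times\dth{\mm}{i}}{\HH^i}$. Combining the two identities eliminates the common cross-product term and produces
\begin{equation*}
\prodh{\dth{\mm}{i}}{\HH^i}=\alpha\,\norm{\dth{\mm}{i}}{h}^2.
\end{equation*}

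Finally I would unfold the left-hand side using the defining relations~\eqref{eq:deltah} and~\eqref{eq:Ppih}: the $\Deltah$-term gives
$\Cex\prodh{\dth{\mm}{i}}{\Deltah{\midh{\mm}{i}}}=-\Cex\prod{\nabla\dth{\mm}{i}}{\nabla\midh{\mm}{i}}=-\tfrac{\Cex}{2}\dtshort\norm{\nabla\mm_h^{i+1}}{\LL{2}{\Omega}}^2$,
while the $\Ppih$-terms collapse via $\prodh{\Ppih\vvarphi}{\ppsi_h}=\prod{\vvarphi}{\ppsi_h}$ to $\prod{\dth{\mm}{i}}{\ppihmi+\halfh{\ff}{i}}$. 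Rearranging yields exactly~\eqref{eq:discreteenergy1}. Part (b) then follows by multiplying~\eqref{eq:discreteenergy1} by $k$, summing over $j=0,\dots,i$, and noting that the first summand telescopes thanks to the definition of $\dtshort$.

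The only delicate point is the algebraic bookkeeping of the scalar triple products inside the nodal quadrature $\prodh{\cdot}{\cdot}$: everything hinges on the pointwise identities $(a\times b)\cdot b=0$ and $(a\times b)\cdot c=-(a\times c)\cdot b$ being valid at each quadrature node $\zz_\ell$, which in turn requires $\HH^i\in\Vh$ so that the test can be performed. Once this structural observation is in place the argument is entirely mechanical and does not rely on any norm constraint on $\mm_h^i$, which is exactly the flexibility highlighted in the remark following the statement.
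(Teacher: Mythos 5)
Your proof is correct and is essentially the paper's own argument: you introduce the auxiliary field $\HH^i$ (which the paper calls $\hh_h^{i+\frac12}$), test~\eqref{eq:newmidpoint} with the two special choices $\vvarphi_h=\dth{\mm}{i}$ and $\vvarphi_h=\HH^i$, exploit the scalar-triple-product cancellations node by node, combine the two resulting identities, and then unfold $\prodh{\dth{\mm}{i}}{\HH^i}$ via~\eqref{eq:deltah} and~\eqref{eq:Ppih}; part (b) is the telescoping sum. The only cosmetic difference from the paper is the order in which the two test functions are used, which is immaterial.
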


\begin{proof}
Let $i \in \{ 0, \dots , M-1 \}$. With $\hh_h^{i+\frac12} := \Cex \Deltah{ \midh{\mm}{i} } + \Ppih{ \ppihmi } + \Ppih{ \halfh{\ff}{i}} \in \Vh$, equation~\eqref{eq:newmidpoint} reads as
\begin{align}\label{eq:newmidpointcompact}
\prodh{\dth{\mm}{i}}{\vvarphi_h} = - \prodh{\midh{\mm}{i} \!\times\! \hh_h^{i+\frac12} }{\vvarphi_h} + \alpha \prodh{\midh{\mm}{i} \!\times\! \dth{\mm}{i}}{\vvarphi_h} \textrm{ for all } \vvarphi_h \in \Vh.
\end{align}
Testing~\eqref{eq:newmidpointcompact} with $\vvarphi_h := \dth{\mm}{i} \in \Vh$, we obtain
\begin{align}\label{eq:specialtestenergy1}
\norm{ \dth{\mm}{i} }{h}^2 &\stackrel{\eqref{eq:newmidpointcompact}}{=} -\prodh{\midh{\mm}{i} \times \hh_h^{i+\frac12}}{\dth{\mm}{i}} = \prodh{\midh{\mm}{i} \times \dth{\mm}{i}}{\hh_h^{i+\frac12} }.
\end{align}
Testing~\eqref{eq:newmidpointcompact}, with $\vvarphi_h := \hh_h^{i+\frac12}  \in \Vh$, we obtain
\begin{align}\label{eq:specialtestenergy2}
&\prodh{\dth{\mm}{i}}{\hh_h^{i+\frac12} } \stackrel{\eqref{eq:newmidpointcompact}}{=} \alpha \prodh{\midh{\mm}{i} \times \dth{\mm}{i}}{\hh_h^{i+\frac12} }.
\end{align}
We multiply~\eqref{eq:specialtestenergy1} with $\alpha$ and get
\begin{align*}
&\alpha \norm{ \dth{\mm}{i} }{h}^2 
 \stackrel{\eqref{eq:specialtestenergy1}}{=} \alpha \prodh{\midh{\mm}{i} \times \dth{\mm}{i}}{\hh_h^{i+\frac12} }
\stackrel{\eqref{eq:specialtestenergy2}}{=} \prodh{\dth{\mm}{i}}{\hh_h^{i+\frac12} } \notag \\
&\qquad  = \Cex \prodh{\dth{\mm}{i}}{\Deltah{ \midh{\mm}{i}} } + 
\prodh{\dth{\mm}{i}}{\Ppih{ \ppihmi } + \Ppih{ \halfh{\ff}{i}} }.
\end{align*}
We calculate
\begin{align}\label{eq:energy3}
\begin{split}
 &\prodh{\dth{\mm}{i}}{\Deltah{ \midh{\mm}{i}} } 
 \stackrel{\eqref{eq:deltah}}{=} - \prod{\nabla \dth{\mm}{i}}{\nabla \midh{\mm}{i}} \\
 &\qquad
 \stackrel{\eqref{eq:discreteobjects}}{=}
-\frac{1}{2k} \prod{\nabla \mm_h^{i+1} - \nabla \mm_h^i}{\nabla \mm_h^{i+1} + \nabla \mm_h^i} 
= -\frac12 \dtshort \norm{ \nabla \mm_h^{i+1} }{\LL{2}{\Omega}}^2.
\end{split}
\end{align}
Combining the latter two identities and exploiting the definition~\eqref{eq:Ppih} of $\Ppih$, we prove~(a). To prove (b), we employ the telescopic series together with (a). 
This concludes the proof.
\end{proof}


\begin{lemma}\label{lemma:mhktinfty}
Let $\mm_h^0$, $\widehat{\ff}_{hk}$, $\boldsymbol{\Pi}_h$ resp.\ $\mm^0$, $\ff$, $\boldsymbol{\pi}$ satisfy the assumptions of Theorem~\ref{theorem:mainresult}~{\rm (a)}. Then, there exists $k_0>0$ such that for all $k<k_0$, the postprocessed output~\eqref{eq:discretefunctions} of Algorithm \ref{alg:midpoint} satisfies, for all $\mm^{\star}_{hk} \in \{\mm_{hk}, \mm_{hk}^+,\mm_{hk}^-,\overline{\mm}_{hk}, \mm_{hk}^{=} \}$,
\begin{align*}
\sup\limits_{\tau \in (0,T)} \norm{ \mm_{hk}^{\star} (\tau) }{\HH{1}{\Omega}}^2 + \sup\limits_{\tau \in (0,T)} \norm{ \partial_t \mm_{hk} }{\mathbf{L}^2 ( (0,\tau) \times \Omega )}^2 \leq C.
\end{align*}
The constant $C>0$ depends only on $\Cex$, $\alpha$, $\mm^0$, $\ff$, $\boldsymbol{\pi}$, $C_{\boldsymbol{\pi}}$, $T$, $\Omega$, and $\Cmesh$.
\end{lemma}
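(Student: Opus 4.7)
The plan is to bootstrap from the discrete energy identity of Lemma~\ref{lemma:dicreteenergy}(b), combine the stability bound~\eqref{eq:pihbounded} of $\boldsymbol{\Pi}_h$ with the $\boldsymbol{L}^\infty$- and $\boldsymbol{L}^2$-conservation from Proposition~\ref{prop:solvable} to control the right-hand side of~\eqref{eq:discreteenergy2}, absorb the problematic implicit contribution by smallness of $k$, apply a discrete Gronwall argument, and finally transfer the nodal bound to the five postprocessed functions.

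First I would estimate the right-hand side of~\eqref{eq:discreteenergy2}. Cauchy--Schwarz and Young's inequality with parameter $\varepsilon=\alpha/2$ yield
\begin{align*}
\Big|k\sum_{j=0}^{i}\prod{\dth{\mm}{j}}{\ppihmj+\halfh{\ff}{j}}\Big|
\le \frac{\alpha}{2}k\sum_{j=0}^{i}\norm{\dth{\mm}{j}}{\LL{2}{\Omega}}^{2}
+ C k\sum_{j=0}^{i}\Big(\norm{\ppihmj}{\LL{2}{\Omega}}^{2}+\norm{\halfh{\ff}{j}}{\LL{2}{\Omega}}^{2}\Big).
\end{align*}
The norm equivalence~\eqref{eq:normhequiv1} allows to absorb the first sum into the LHS term $\alpha k\sum\|\dth{\mm}{j}\|_h^{2}$. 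Proposition~\ref{prop:solvable} together with~\eqref{eq:mzerobounded} provides $\|\mm_h^j\|_{\LL{\infty}{\Omega}}\le C_0$ as well as $\|\mm_h^j\|_{\LL{2}{\Omega}}\le\|\mm_h^j\|_h=\|\mm_h^0\|_h\le\sqrt{5}\|\mm_h^0\|_{\LL{2}{\Omega}}$ uniformly in $h$ and $j$, so that the stability~\eqref{eq:pihbounded} of $\boldsymbol{\Pi}_h$ translates into
\begin{align*}
\norm{\ppihmj}{\LL{2}{\Omega}}^{2}
\le C\Big(1+\norm{\nabla\mm_h^{j+1}}{\LL{2}{\Omega}}^{2}+\norm{\nabla\mm_h^{j}}{\LL{2}{\Omega}}^{2}+\norm{\nabla\mm_h^{j-1}}{\LL{2}{\Omega}}^{2}\Big),
\end{align*}
with $C$ depending only on $C_{\boldsymbol{\pi}}, C_0, \|\mm_h^0\|_{\LL{2}{\Omega}}$. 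The contribution $k\sum\|\halfh{\ff}{j}\|_{\LL{2}{\Omega}}^{2}$ is dominated by $\|\widehat{\ff}_{hk}\|_{\LL{2}{\Omega_T}}^{2}$, which is uniformly bounded as a consequence of the weak convergence~\eqref{eq:ffhkweak}.

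The main obstacle appears here: the right-hand side carries the term $Ck\,\|\nabla\mm_h^{i+1}\|_{\LL{2}{\Omega}}^{2}$, which collides with the LHS term $\tfrac{\Cex}{2}\|\nabla\mm_h^{i+1}\|_{\LL{2}{\Omega}}^{2}$ of~\eqref{eq:discreteenergy2}. This is the price paid for keeping the stability~\eqref{eq:pihbounded} general enough to cover the implicit choice~\eqref{eq:approach_mp}. I resolve it by requiring $k<k_0:=\Cex/(4C)$ with $k_0$ depending only on the data, which permits to absorb the offending contribution into the LHS. What is left on the right-hand side is $\tfrac{\Cex}{2}\|\nabla\mm_h^0\|_{\LL{2}{\Omega}}^{2}$ (bounded via~\eqref{eq:mzeroweak}), a constant depending on $T$ and on $\|\widehat{\ff}_{hk}\|_{\LL{2}{\Omega_T}}$, plus the Gronwall-type remainder $Ck\sum_{j=0}^{i-1}\|\nabla\mm_h^{j}\|_{\LL{2}{\Omega}}^{2}$. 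The discrete Gronwall lemma applied to $E_i:=\|\nabla\mm_h^i\|_{\LL{2}{\Omega}}^{2}$ then yields
\begin{align*}
\max_{0\le i\le M}\norm{\nabla\mm_h^i}{\LL{2}{\Omega}}^{2}+k\sum_{j=0}^{M-1}\norm{\dth{\mm}{j}}{h}^{2}\le C,
\end{align*}
with $C$ only depending on the quantities listed in the statement.

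The last step is to transfer this nodal bound to the five postprocessed functions. For the piecewise-constant interpolants $\mm_{hk}^{+},\mm_{hk}^{-},\mm_{hk}^{=}$ and for the convex combination $\overline{\mm}_{hk}$ the pointwise-in-time $\HH{1}{\Omega}$-norm is controlled by the maximum of $\|\mm_h^j\|_{\HH{1}{\Omega}}$ taken over two or three consecutive indices; the same bound applies to the piecewise affine interpolant $\mm_{hk}$ by convexity. The $\LL{2}{\Omega}$-part of the $\HH{1}{\Omega}$-norm was already controlled via Proposition~\ref{prop:solvable} and~\eqref{eq:normhequiv1}, while the gradient part is provided by the Gronwall estimate above. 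Finally, since $\partial_t\mm_{hk}(t)=\dth{\mm}{i}$ on $[t_i,t_{i+1})$, one has $\int_0^\tau\|\partial_t\mm_{hk}\|_{\LL{2}{\Omega}}^{2}\d{t}\le k\sum_{j=0}^{M-1}\|\dth{\mm}{j}\|_{\LL{2}{\Omega}}^{2}\le k\sum_{j=0}^{M-1}\|\dth{\mm}{j}\|_h^{2}$, which is again bounded by the Gronwall estimate, concluding the proof.
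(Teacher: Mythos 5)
Your proof is correct and follows essentially the same route as the paper: start from the discrete energy equality of Lemma~\ref{lemma:dicreteenergy}(b), apply Young's inequality to the right-hand side, invoke the stability estimate~\eqref{eq:pihbounded} together with the $\LL{\infty}{\Omega}$- and $\LL{2}{\Omega}$-conservation from Proposition~\ref{prop:solvable} and~\eqref{eq:mzerobounded}, observe that the implicit term $k\,\norm{\nabla\mm_h^{i+1}}{\LL{2}{\Omega}}^2$ must be absorbed into the left-hand side using $k<k_0$, apply the discrete Gronwall lemma, and finally carry the nodal bound over to the five postprocessed functions. The only cosmetic difference is that you commit to the Young parameter $\alpha/2$ from the start, whereas the paper keeps a free parameter $\delta$ and chooses $\delta\ll\alpha$ at the end; this changes nothing in the argument.
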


\begin{proof}
Let $\mm_{hk}^{\star} \in \{ \mm_{hk} , \mm_{hk}^+,\mm_{hk}^-,\overline{\mm}_{hk}, \mm_{hk}^{=} \}$ and $t \in \left[t_i, t_{i+1} \right)$. Proposition~\ref{prop:solvable} yields
\begin{eqnarray}\label{eq:mhkltwonorm}
\begin{split}
\norm{ \mm_{hk}^{\star} \lr{t} }{\LL{2}{\Omega}}^2 
& \lesssim 
\norm{ \mm_h^{i+1} }{\LL{2}{\Omega}}^2 + 
\norm{ \mm_h^{i} }{\LL{2}{\Omega}}^2 + 
\norm{ \mm_h^{i-1} }{\LL{2}{\Omega}}^2 \\
& \!\stackrel{\eqref{eq:normhequiv1}}{\simeq}
 \norm{ \mm_h^{i+1} }{h}^2 + 
\norm{ \mm_h^{i} }{h}^2 + 
\norm{ \mm_h^{i-1} }{h}^2 
\stackrel{\textrm{Prop. \ref{prop:solvable}}}{=}
3\,\norm{ \mm_h^{0} }{h}^2 \stackrel{\eqref{eq:normhequiv1}}{\simeq}
\norm{ \mm_h^{0} }{\LL{2}{\Omega}}^2.
\end{split}
\end{eqnarray}
By definition of $\mm_{hk}^{\star},$ it holds that
\begin{align}\label{eq:defsum1}
& \norm{ \nabla \mm_{hk}^{\star} (t) }{\LL{2}{\Omega}}^2 + 
\alpha \Int{0}{t} \norm{ \partial_t \mm_{hk} }{\LL{2}{\Omega}}^2 \d{t} \leq 
\norm{ \nabla \mm_{hk}^{\star} (t) }{\LL{2}{\Omega}}^2 + 
\alpha \Int{0}{t_{i+1}} \norm{ \partial_t \mm_{hk} }{\LL{2}{\Omega}}^2 \d{t} \notag \\
& \stackrel{\eqref{eq:discretefunctions}}{\lesssim} 
\norm{ \nabla \mm_h^{i+1} }{\LL{2}{\Omega}}^2 + \norm{ \nabla \mm_h^{i} }{\LL{2}{\Omega}}^2 + 
\norm{ \nabla \mm_h^{i-1} }{\LL{2}{\Omega}}^2 + \alpha k \Sum{j=0}{i} \norm{ \dth{\mm}{j} }{\LL{2}{\Omega}}^2 =: {\rm \rho}^{i}.
\end{align}
With the discrete energy equality~\eqref{eq:discreteenergy2} from Lemma~\ref{lemma:dicreteenergy}~
\rm(b), 
we estimate
\begin{eqnarray}\label{eq:mhkhone}
\begin{split}
{\rm \rho}^{i} & \stackrel{\eqref{eq:normhequiv1}, \eqref{eq:discreteenergy2}}{\lesssim} & 
\norm{ \nabla \mm_h^0 }{\LL{2}{\Omega}}^2 + 
k \Sum{j=0}{i} \abs{\prod{ \dth{\mm}{j}}{ \ppihmj + \halfh{\ff}{j}}}.
\end{split}
\end{eqnarray}
With the Young inequality and for arbitrary $\delta>0$, we get
\begin{eqnarray}\label{eq:mhiinfty}
{\rm \rho}^{i} &\lesssim&
\norm{ \nabla \mm_h^0 }{\LL{2}{\Omega}}^2 + \delta k \Sum{j=0}{i} \norm{ \dth{\mm}{j} }{\LL{2}{\Omega}}^2 + \frac{k}{\delta} \Sum{j=0}{i} \norm{ \ppihmj }{\LL{2}{\Omega}}^2 + \frac{1}{\delta} \Int{0}{t_{i+1}} \norm{ \widehat{\ff}_{hk} }{\LL{2}{\Omega}}^2 \d{t} \notag \\ 
& \stackrel{\eqref{eq:pihbounded}}{\lesssim} & \norm{ \nabla \mm_h^0 }{\LL{2}{\Omega}}^2 + \delta k \Sum{j=0}{i} \norm{ \dth{\mm}{j} }{\LL{2}{\Omega}}^2 \notag\\
&& \qquad +\frac{k}{\delta} (1+\max_{j=0,\dots,i+1} \norm{ \mm_h^j }{\LL{\infty}{\Omega}})^2 \Sum{j=0}{i+1} \norm{ \mm_h^j }{\HH{1}{\Omega}}^2 + 
\frac{1}{\delta} \Int{0}{t_{i+1}} \norm{ \widehat{\ff}_{hk} }{\LL{2}{\Omega}}^2 \d{t}.
\end{eqnarray}
Proposition \ref{prop:solvable} yields $\norm{ \mm_h^{i+1} }{\LL{\infty}{\Omega}} = \norm{ \mm_h^0 }{\LL{\infty}{\Omega}}$ for all $i \in \{0,\dots,M-1\}$. Combining~\eqref{eq:mhiinfty} with~\eqref{eq:mzerobounded}, we obtain
\begin{align}\label{eq:beforegronwall1}
\begin{split}
{\rm \rho}^{i} \stackrel{\eqref{eq:mzerobounded},\eqref{eq:mhkltwonorm}}{\lesssim} & \norm{ \nabla \mm_h^0 }{\LL{2}{\Omega}}^2 + \delta k \Sum{j=0}{i} \norm{ \dth{\mm}{j} }{\LL{2}{\Omega}}^2 + \frac{1}{\delta} \Int{0}{t_{i+1}} \norm{ \widehat{\ff}_{hk} }{\LL{2}{\Omega}}^2 \d{t} \\ 
& \quad + \frac{T}{\delta} \norm{\mm_h^0}{\LL{2}{\Omega}}^2
 + \frac{k}{\delta} \Sum{j=0}{i+1} \norm{\nabla \mm_h^j}{\LL{2}{\Omega}}^{2}.
 \end{split}
\end{align}
We choose $\delta \ll \alpha$ such that $\delta k \Sum{j=0}{i} \norm{ \dth{\mm}{j} }{\LL{2}{\Omega}}^2$ from~\eqref{eq:beforegronwall1} can be absorbed into the corresponding term of ${\rm \rho}^{i}$. Moreover, $k \norm{\nabla \mm_h^{i+1}}{\LL{2}{\Omega}}$ from~\eqref{eq:beforegronwall1} can be absorbed into ${\rm \rho}^{i}$ if $k<k_0$ is sufficiently small. Overall,~\eqref{eq:mhkhone}--\eqref{eq:beforegronwall1} result in
\begin{align}\label{eq:beforegronwall2}
\begin{split}
{\rm \rho}^{i} & \lesssim 
\norm{ \nabla \mm_h^0 }{\LL{2}{\Omega}}^2 + \frac{1}{\delta} \Int{0}{t_{i+1}} \norm{ \widehat{\ff}_{hk} }{\LL{2}{\Omega}}^2 \d{t} + \frac{T}{\delta} \norm{\mm_h^0}{\LL{2}{\Omega}}^2 + \frac{k}{\delta} \Sum{j=0}{i} \norm{\nabla \mm_h^j}{\LL{2}{\Omega}}^{2} \\
& \leq \norm{ \nabla \mm_h^0 }{\LL{2}{\Omega}}^2 + \frac{1}{\delta} \norm{ \widehat{\ff}_{hk} }{\LL{2}{\Omega_T}}^2 + \frac{T}{\delta} \norm{\mm_h^0}{\LL{2}{\Omega}}^2 + \frac{k}{\delta} \Sum{j=0}{i-1} {\rm \rho}^{j}
\end{split}
\end{align}
for $i=0,\dots,M-1$ and $k<k_0$. With the assumptions on $\mm_h^0$, $\widehat{\ff}_{hk}$, $\boldsymbol{\Pi}_h$ resp.\ $\mm^0$, $\ff$, $\boldsymbol{\pi}$, estimate~\eqref{eq:beforegronwall2} takes the form
\begin{align*}
{\rm \rho}^{i} \leq \alpha_0 + \beta \Sum{j=0}{i-1}  {\rm \rho}^{j} \quad \textrm{for all } i = 1,\dots,M-1,
\end{align*}
where $\abs{{\rm \rho}^{0}} < \alpha_0 < \infty$ and $0 <\beta \simeq k/\delta < \infty$. Thus, the discrete Gronwall lemma (e.g., \cite[Lemma~1.4.2]{Quarteroni1994}) yields that 
\begin{align*}
{\rm \rho}^{i} \leq \alpha_0 \exp\left(\Sum{j=0}{i-1} \beta\right) \lesssim \exp\left(\Sum{j=0}{i-1} \frac{k}{\delta}\right) \leq 
\exp(T/\delta).
\end{align*}
Together with~\eqref{eq:mhkltwonorm}--\eqref{eq:defsum1}, this concludes the proof.
\end{proof}

\begin{lemma}\label{lemma:extractsubsequences}
Let $\mm_h^0$ , $\widehat{\ff}_{hk}$, $\boldsymbol{\Pi}_h$ resp.\ $\mm^0$, $\ff$, $\boldsymbol{\pi}$ satisfy the assumptions of Theorem~\ref{theorem:mainresult} {\rm (a)}. Then, there exist $\mm \in \HH{1}{\Omega_T} \cap \L{\infty}{0,T;\HH{1}{\Omega}}$ as well as subsequences of the postprocessed output~\eqref{eq:discretefunctions} of Algorithm \ref{alg:midpoint} such that, for all $\mm_{hk}^{\star} \in  \{ \mm_{hk} , \mm_{hk}^+,\mm_{hk}^-,\overline{\mm}_{hk}, \mm_{hk}^{=} \}$,
\begin{enumerate}[label=\rm(\alph*)]
\item \label{item:weaksubsequence1} $\mm_{hk} \rightharpoonup \mm$ in $\HH{1}{\Omega_T}$,
\item \label{item:weaksubsequence2a} $\mm_{hk} \stackrel{\ast}{\rightharpoonup} \mm$ in $\L{\infty}{0,T;\HH{1}{\Omega}}$,
\item \label{item:weaksubsequence2} $\mm_{hk}^{\star} \rightharpoonup \mm$ in $\L{2}{0,T;\HH{1}{\Omega}}$,
\item \label{item:weaksubsequence3} $\mm_{hk}^{\star} \rightarrow \mm$ in $\LL{2}{\Omega_T}$,
\item \label{item:weaksubsequence4} $\mm_{hk}^{\star} \rightarrow \mm$ pointwise almost everywhere in $\Omega_T$,
\item \label{item:weaksubsequence5} $\mm_{hk}^{\star} \ltr \rightarrow \mm \ltr $ in $\LL{2}{\Omega}$ for $t \in \left[0,T\right)$ almost everywhere.
\end{enumerate}
where all convergences hold with respect to the same subsequence \textashkzero.
\end{lemma}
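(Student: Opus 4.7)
The plan is to use Lemma~\ref{lemma:mhktinfty} as the source of all uniform bounds, extract subsequences via reflexivity and weak-$*$ compactness, upgrade weak convergence to strong $\LL{2}$-convergence via the Rellich--Kondrachov compactness theorem, and reconcile the different postprocessed quantities by showing that they pairwise differ by quantities of order $k$ in $\LL{2}{\Omega_T}$.

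First I would exploit Lemma~\ref{lemma:mhktinfty}: it yields that $\mm_{hk}$ is uniformly bounded in $\HH{1}{\Omega_T}$ and that each variant $\mm_{hk}^{\star}$ is uniformly bounded in $\L{\infty}{0,T;\HH{1}{\Omega}}$, hence also in $\L{2}{0,T;\HH{1}{\Omega}}$. Since $\HH{1}{\Omega_T}$ is reflexive, the Banach--Alaoglu theorem yields a subsequence and a limit $\mm \in \HH{1}{\Omega_T}$ with $\mm_{hk} \rightharpoonup \mm$ in $\HH{1}{\Omega_T}$, which proves~\ref{item:weaksubsequence1}. Using that $\L{\infty}{0,T;\HH{1}{\Omega}}$ is the dual of the separable space $L^1(0,T;{\HH{1}{\Omega}}^{*})$, a further extraction yields weak-$*$ convergence of $\mm_{hk}$; testing both convergences against smooth compactly supported test functions identifies the limit with $\mm$, proving~\ref{item:weaksubsequence2a} and the regularity $\mm \in \L{\infty}{0,T;\HH{1}{\Omega}}$.

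Next, I would successively extract weakly convergent subsequences of each remaining variant $\mm_{hk}^{+}$, $\mm_{hk}^{-}$, $\overline{\mm}_{hk}$, $\mm_{hk}^{=}$ in $\L{2}{0,T;\HH{1}{\Omega}}$. The identification of all weak limits with $\mm$ rests on the elementary observation that, on each subinterval $[t_i,t_{i+1})$, the difference between $\mm_{hk}$ and any postprocessed variant equals a factor of magnitude at most $k$ times a (shifted) discrete time-derivative $\dth{\mm}{j}$; summing in $\LL{2}$ over all subintervals yields $\norm{\mm_{hk}^{\star} - \mm_{hk}}{\LL{2}{\Omega_T}} \lesssim k\,\norm{\partial_t \mm_{hk}}{\LL{2}{\Omega_T}} \to 0$, so uniqueness of weak limits gives~\ref{item:weaksubsequence2}. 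For the strong convergence~\ref{item:weaksubsequence3}, the compact embedding $\HH{1}{\Omega_T} \hookrightarrow \LL{2}{\Omega_T}$ (Rellich--Kondrachov) promotes $\mm_{hk}\rightharpoonup\mm$ to $\mm_{hk}\to\mm$ strongly in $\LL{2}{\Omega_T}$, and the same $O(k)$ estimate transfers strong convergence to the remaining variants. Assertion~\ref{item:weaksubsequence4} then follows by passing to a further subsequence on which strong $\LL{2}$-convergence upgrades to pointwise a.e.\ convergence (with only five variants, successive nested extraction succeeds), and~\ref{item:weaksubsequence5} is obtained by the same procedure applied to the $L^1(0,T)$-functions $t \mapsto \norm{\mm_{hk}^{\star}(t) - \mm(t)}{\LL{2}{\Omega}}^2$, which tend to $0$ in $L^1(0,T)$ thanks to~\ref{item:weaksubsequence3}.

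The main concern is bookkeeping rather than deep analysis: one must perform the extractions in a careful nested order so that a single joint subsequence realizes all six convergences for all five variants simultaneously, and one must verify that the $O(k)$ comparison estimate really does apply to each variant (including $\mm_{hk}^{=}$, where the shift in index forces us to use the bound $\norm{\partial_t \mm_{hk}}{\LL{2}{\Omega_T}}\le C$ rather than a pointwise-in-time quantity). Since only finitely many variants and statements are involved, this is straightforwardly arranged, and no new ideas beyond Lemma~\ref{lemma:mhktinfty} and standard functional-analytic compactness are required.
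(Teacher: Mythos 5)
Your proposal is correct and follows essentially the same route as the paper: uniform bounds from Lemma~\ref{lemma:mhktinfty}, Banach--Alaoglu/Eberlein--\v{S}mulian for the weak and weak-$*$ extractions, Rellich--Kondrachov for strong $\LL{2}{\Omega_T}$-convergence of $\mm_{hk}$, the comparison estimate $\norm{\mm_{hk}^{\star}-\mm_{hk}}{\LL{2}{\Omega_T}}\lesssim k\,\norm{\partial_t\mm_{hk}}{\LL{2}{\Omega_T}}$ to identify limits and transfer strong convergence to the remaining variants (including the index-shifted $\mm_{hk}^{=}$), and a final successive extraction for the pointwise-a.e.\ statements. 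The only cosmetic difference is the order in which you identify the weak limits (before rather than after Rellich--Kondrachov), which does not change the substance.
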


\begin{proof}
Let $\mm^{\star}_{hk} \in \{ \mm_{hk} , \mm_{hk}^+,\mm_{hk}^-,\overline{\mm}_{hk}, \mm_{hk}^{=} \}$. Lemma~\ref{lemma:mhktinfty} yields the existence of $C>0$ which is independent of $h,k>0$, such that $\norm{ \mm_{hk} }{\HH{1}{\Omega_T}} + \norm{ \mm_{hk}^{\star} }{\L{\infty}{\HH{1}{\Omega}}} \leq C<\infty$. With the Eberlein--\v{S}mulian theorem (resp.\ the Banach--Alaoglu theorem) and successive extraction of subsequences for all $\mm^{\star}_{hk} \in \{ \mm_{hk} , \mm_{hk}^+,\mm_{hk}^-,\overline{\mm}_{hk}, \mm_{hk}^{=} \}$, we get the convergences of \ref{item:weaksubsequence1}--\ref{item:weaksubsequence2} with possibly different limits. Let $\mm$ be the limit of~\ref{item:weaksubsequence1}. To prove~\ref{item:weaksubsequence3}, we use the Rellich--Kondrachov theorem and deduce from \ref{item:weaksubsequence1} that $\mm_{hk} \rightarrow \mm$ in $\LL{2}{\Omega_T}$. By definition of $\mm_{hk}^-$, we get
\begin{eqnarray*}
 \norm{ \mm_{hk} -\mm_{hk}^- }{\LL{2}{\Omega_T}}^2 
& \stackrel{\eqref{eq:discretefunctions}}{=} & \Sum{j=0}{M-1}  \Int{t_j}{t_{j+1}} \frac{(t - t_j)^2}{k^2} \norm{ \mm_{h}^{j+1} - \mm_{h}^j }{\LL{2}{\Omega}}^2 \d{t} \notag \\
& \stackrel{\eqref{eq:discreteobjects}}{=} & \Sum{j=0}{M-1} \Int{t_j}{t_{j+1}} (t - t_j)^2 \norm{ \partial_t \mm_{hk} \ltr }{\LL{2}{\Omega}}^2 \d{t}  \leq  k^2  \norm{ \partial_t \mm_{hk} }{\LL{2}{\Omega_T}}^2.
\end{eqnarray*}
Since $\partial_t \mm_{hk} \rightharpoonup \partial_t \mm$ and $\mm_{hk} \rightarrow \mm$ in $\LL{2}{\Omega_T}$, it follows that $\mm_{hk}^- \rightarrow \mm$ in $\LL{2}{\Omega_T}$. The convergences $\mm_{hk}^+,\mm_{hk}^-,\overline{\mm}_{hk} \rightarrow \mm$ in $\LL{2}{\Omega_T}$ follow analogously. Moreover,
\begin{align*}
\norm{ \mm_{hk}^- - \mm_{hk}^{=} }{\LL{2}{\Omega_T}} \leq \norm{ \mm_{hk}^+ - \mm_{hk}^{-} }{\LL{2}{\Omega_T}}  \longrightarrow 0
\end{align*}
concludes the proof of~\ref{item:weaksubsequence3}. Moreover, this concludes~\ref{item:weaksubsequence1}--\ref{item:weaksubsequence2}, since it identifies the limits. Upon successive extraction of further subsequences,~\ref{item:weaksubsequence4} and \ref{item:weaksubsequence5} are direct consequences of \ref{item:weaksubsequence3}.
\end{proof}

{\bf Convention.} {\em For the rest of this section, all limits are interpreted \textashkzero \ resp.\ $h \rightarrow 0$. Whenever limits of the postprocessed output~\eqref{eq:discretefunctions} of Algorithm \ref{alg:midpoint} are considered, this is understood in the sense of Lemma~
\ref{lemma:extractsubsequences}, i.e., all convergences hold with respect to the same subsequence.}
\subsection{Proof of Theorem~\ref{theorem:mainresult}~(a)}
In this subsection, we show that $\mm\in \HH{1}{\Omega_T}$ from Lemma~\ref{lemma:extractsubsequences} 
satisfies Definition~\ref{def:weak}~\ref{item:weak1}--\ref{item:weak4}. To that end, we adopt the notation of Lemma~\ref{lemma:extractsubsequences}. 

To see that $\abs{\mm} = 1$\ a.e. on $\Omega_T$, let $K \in \Trian$ and $t \in [t_i,t_{i+1})$. Then, for almost all $\xx \in K$, it holds that
\begin{align*}
\big| | \mm_{hk}^- (t, \xx ) | -  1 \big|  &= 
\big| | \mm_{h}^i (\xx) | -  | \mm^0 (\xx ) | \big| \\
&\leq \big| | \mm_{h}^i (\xx) | -  | \mm_h^0 (\xx ) | \big| + 
\big| | \mm_{h}^0 (\xx ) | -  | \mm^0 (\xx ) | \big|.
\end{align*}
Let $\zz_{\ell} \in \Nh$ be an arbitrary node of $K$. Since $\nabla \mm_h^i|_{K}$ is constant and $|\mm_h^i (\zz_{\ell})| = |\mm_h^0 (\zz_{\ell})|$ (cf. Proposition \ref{prop:solvable}), we get
\begin{align*}
\big| | \mm_h^i (\xx ) | -  | \mm_h^0 (\xx) | \big| & \leq \big| | \mm_h^i (\xx ) | -  | \mm_h^i (\zz_{\ell} ) | \big| 
+ 
\big| | \mm_h^0 (\zz_{\ell} ) | -  | \mm_h^0 ( \xx ) | \big| \\
& \leq \big| \nabla \mm_h^i|_{K} \big| | \xx - \zz_{\ell} | + 
\big| \nabla \mm_h^0|_{K} \big| | \xx - \zz_{\ell} |.
\end{align*}
Combining the last estimates, we obtain
\begin{align*}
\big| | \mm_{hk}^- (t, \xx ) | -  1 \big| \leq \big| \nabla \mm_{hk}^-|_{K} \big| | \xx - \zz_{\ell} | + 
\big| \nabla \mm_h^0|_{K} \big| | \xx - \zz_{\ell} | + \big| | \mm_h^0(\xx) | -  | \mm^0 (\xx) | \big|.
\end{align*}
Integrating this estimate over $\Omega_T$, we derive
\begin{align*}
& \norm{ \abs{ \mm } - 1 }{\L{2}{\Omega_T}} \leq
\norm{ \abs{\mm} - \abs{\mm_{hk}^-} }{\L{2}{\Omega_T}} + \norm{ \abs{\mm_{hk}^-} - 1}{\L{2}{\Omega_T}} \\
&  
\le \norm{ \mm - \mm_{hk}^-}{\LL{2}{\Omega_T}} + h \norm{ \nabla \mm_{hk}^{-} }{\LL{2}{\Omega_T}} + \sqrt{T} h \,\norm{ \nabla \mm_h^{0} }{\LL{2}{\Omega}} + \sqrt{T} \,\norm{ \mm_h^0 - \mm^0}{\LL{2}{\Omega}} .
\end{align*}
Since $\mm_{hk}^- \rightarrow \mm$ in $\LL{2}{\Omega_T}$, $\mm_{hk}^- \rightharpoonup \mm$ in $\L{2}{0,T;\HH{1}{\Omega}}$, and $\mm_h^0 \rightharpoonup \mm^0$ in $\HH{1}{\Omega}$, 
the right-hand side vanishes as $(h,k)\to(0,0)$. This concludes $ \norm{ \abs{ \mm } - 1 }{\L{2}{\Omega_T}} = 0$ and hence verifies Definition~\ref{def:weak}~\ref{item:weak1}.

To see that $\mm(0) = \mm^0$ in the sense of traces, note that  $\mm_{hk} \left( 0 \right) = \mm_h^0 \rightharpoonup \mm^0$ in $\HH{1}{\Omega}$. On the other hand, boundedness of the trace operator implies $\mm_{hk} \left( 0 \right) \rightharpoonup \mm \left( 0 \right)$ in $\HH{{1/2}}{\Omega}$. Since weak limits are unique and $\HH{1}{\Omega} \subset \HH{{1/2}}{\Omega}$, we conclude $\mm^0 = \mm \left(0\right)$ and verify Definition~\ref{def:weak}~\ref{item:weak2}. 

Bounded energy in the sense of Definition~\ref{def:weak}~\ref{item:weak4} is an immediate consequence of Lemma~\ref{lemma:mhktinfty}. This concludes the proof of Theorem~\ref{theorem:mainresult}~(a).\qed

\subsection{Proof of Theorem~\ref{theorem:mainresult}~(b)}
It only remains to verify Definition~\ref{def:weak}~\ref{item:weak3}. To this end, let $\vvarphi \in \boldsymbol{C}^{\infty} ( \overline{\Omega_T} )$. Let $\II_h$ be the nodal interpolant~\eqref{eq:nodalinterpolant}. Define $\vvarphi_h \in C^{\infty} ([0,T];\HH{1}{\Omega})$ by $\vvarphi_h (t) := \II_h( \vvarphi (t) )$. Then, $\vvarphi_h(t) \rightarrow \vvarphi(t)$ in $\boldsymbol{W}^{1,p}(\Omega)$ for all $p \in (3/2,\infty]$ and, consequently,
\begin{align}\label{eq:propinterpolant1}
\vvarphi_h \rightarrow \vvarphi \quad \textrm{in } \L{2}{0,T;\boldsymbol{W}^{1,p}(\Omega)} \quad
\textrm{for all } p \in (3/2,\infty].
\end{align}

{\bf Step~1.}\quad We collect some auxiliary results: First, $|\overline{\mm}_{hk} \times \vvarphi_h |_{\boldsymbol{H}^2 ( K )} \lesssim \norm{ \nabla \overline{\mm}_{hk} }{\LL{2}{K}} \linebreak \norm{ \nabla \vvarphi_h }{\LL{\infty}{K}}$ proves
\begin{align}\label{eq:interpolant7}
& \norm{ \nabla ( \overline{\mm}_{hk} \times \vvarphi_h ) - \nabla \II_h ( \overline{\mm}_{hk} \times \vvarphi_h ) }{\LL{2}{\Omega_T}} 
\lesssim h \Bigg( \Sum{j=0}{M-1} \SumSet{K \in \Trian} \Int{t_{j}}{t_{j+1}} | \overline{\mm}_{hk} \times \vvarphi_h |_{\boldsymbol{H}^2 ( K )}^{2} \d{t} \Bigg)^{1/2}\notag \\
&\quad\lesssim h \,\norm{ \nabla \overline{\mm}_{hk} }{\LL{2}{\Omega_T}}\norm{ \nabla \vvarphi_h }{\LL{\infty}{\Omega_T}}
\longrightarrow0.
\end{align}
The same argument proves
\begin{align}\label{eq*:interpolant7}
\norm{  \overline{\mm}_{hk} \times \vvarphi_h  - \II_h ( \overline{\mm}_{hk} \times \vvarphi_h ) }{\LL{2}{\Omega_T}} \longrightarrow0.
\end{align}
Second, it holds that
\begin{align}\label{eq:mphicalc}
\begin{split}
&\norm{ \mm \times \nabla \vvarphi - \overline{\mm}_{hk} \times \nabla \vvarphi_h }{\LL{2}{\Omega_T}} \\
& \quad \leq \norm{ \mm \times \left( \nabla \vvarphi - \nabla \vvarphi_h \right) }{\LL{2}{\Omega_T}} + \norm{ \left( \mm - \overline{\mm}_{hk} \right)\times\nabla \vvarphi_h  }{\LL{2}{\Omega_T}} \\
& \quad \lesssim \norm{ \mm }{\LL{\infty}{\Omega_{T}}}
\norm{ \nabla \vvarphi - \nabla \vvarphi_h }{\LL{2}{\Omega_T}} +   \norm{ \mm - \overline{\mm}_{hk} }{\LL{2}{{\Omega_T}}}\,\norm{ \nabla \vvarphi_h }{\LL{\infty}{\Omega_T}}
\longrightarrow0.
\end{split}
\end{align}
The same argument proves that
\begin{align}\label{eq*:mphicalc}
&\norm{ \mm \times \vvarphi - \overline{\mm}_{hk} \times \vvarphi_h }{\LL{2}{\Omega_T}} \longrightarrow0,
\end{align}
so that the combination with~\eqref{eq*:interpolant7} implies that
\begin{align}\label{eq**:mphicalc}
\norm{ \mm \times \vvarphi - \II_h(\overline{\mm}_{hk} \times \vvarphi_h)}{\LL{2}{\Omega_T}}
\longrightarrow0.
\end{align}

{\bf Step~2.}\quad
Plugging in the definitions of $\mm_{hk}$, $\mm_{hk}^+$, $\mm_{hk}^-$, $\overline{\mm}_{hk}$, $\mm_{hk}^{=}$, $\ppihkm$, $\widehat{\ff}_{hk}$, and $\vvarphi_h$ in~\eqref{eq:newmidpoint} and integrating in time, we obtain
\begin{eqnarray}\label{eq:variationalhk}
I_{hk}^1 &:=& \Int{0}{T} \prodh{\partial_t \mm_{hk}}{\vvarphi_{h}} \d{t} \notag \\
&\stackrel{\eqref{eq:newmidpoint}}{=} &  -\Cex \Int{0}{T} \prodh{\overline{\mm}_{hk} \times \Deltah{ \overline{\mm}_{hk} }} {\vvarphi_{h}} \d{t} - \Int{0}{T} \prodh{\overline{\mm}_{hk} \times \Ppih{ \ppihkm } }{\vvarphi_{h}} \d{t} \notag \\
& &-\Int{0}{T} \prodh{\overline{\mm}_{hk} \times \Ppih{\widehat{\ff}_{hk}} }{\vvarphi_{h}} \d{t} + \alpha \Int{0}{T} \prodh{\overline{\mm}_{hk} \times \partial_t \mm_{hk}}{\vvarphi_{h}} \d{t} \notag \\
& =: & - \Cex \ I_{hk}^2 - I_{hk}^3 - I_{hk}^4 + \alpha \ I_{hk}^5.
\end{eqnarray}
We aim to show that $I_{hk}^i$ for $i=1,\dots,5$ converge to their continuous counterparts in~\eqref{eq:variational}. 

{\bf Step~3 (Convergence of $\boldsymbol{I_{hk}^1}$, $\boldsymbol{I_{hk}^2}$, $\boldsymbol{I_{hk}^5}$).}\quad
With the auxiliary results from {\bf Step~1}, \cite[Section 3]{Bartels2006} proves that it holds
\begin{subequations}
\begin{align}
I_{hk}^1 &\longrightarrow 
\Int{0}{T} \prod{\partial_t \mm}{\vvarphi} \d{t} =: I^1, \\
I_{hk}^2 & \longrightarrow  
- \Int{0}{T} \prod{\mm \times \nabla \mm}{\nabla \vvarphi} \d{t} =:I^2,\\
I_{hk}^5 & \longrightarrow \Int{0}{T} \prod{\mm \times \partial_t \mm}{\vvarphi} \d{t} =:I^5.
\end{align}
\end{subequations}

{\bf Step~4 (Convergence of $\boldsymbol{I_{hk}^3}$ and $\boldsymbol{I_{hk}^4}$).}\quad
Using the definition~\eqref{eq:Ppih} of $\Ppi_h$ and $\boldsymbol{\Pi}_{hk}\rightharpoonup \ppi{\mm}$ from assumption~\eqref{eq:pihweak}, we obtain
\begin{align}\label{eq:calcint3}
\begin{split}
I_{hk}^3 
\stackrel{\eqref{eq:Ppih}}{=}  -\Int{0}{T} \prod{\ppihkm }{\II_h(\overline{\mm}_{hk} \times \vvarphi_{h})} \d{t} 
\stackrel{\eqref{eq**:mphicalc}}{\longrightarrow}
&-\Int{0}{T} \prod{\ppi{\mm}}{\mm \times \vvarphi} \d{t} 
\\&= 
\Int{0}{T} \prod{\mm \times \ppi{\mm}}{\vvarphi} \d{t} =: I^3.
\end{split}
\end{align}
By use of weak convergence $\widehat{\ff}_{hk}\rightharpoonup\ff$ from assumption~\eqref{eq:ffhkweak}, the same argument shows
\begin{align*}
I_{hk}^4 \stackrel{~\eqref{eq:Ppih}}{=} & -\Int{0}{T} \prod{\widehat{\ff}_{hk}}{\overline{\mm}_{hk} \times \vvarphi_{h}} \d{t}
 \stackrel{\eqref{eq**:mphicalc}}{\longrightarrow}- \Int{0}{T} \prod{\ff}{\mm \times \vvarphi} \d{t} = \Int{0}{T} \prod{\mm \times \ff}{\vvarphi} \d{t}.
\end{align*}

{\bf Step~5.}\quad Taking the limit $(h,k)\to(0,0)$ in~\eqref{eq:variationalhk}, we derive 
the variational formulation~\eqref{eq:variational}. By use of a density argument for $\boldsymbol{C}^\infty(\overline{\Omega_T})\subset\boldsymbol{H}^1(\Omega_T)$, this verifies Definition~\ref{def:weak}~\ref{item:weak3} and concludes the proof of Theorem~\ref{theorem:mainresult}~(b).
\qed

\subsection{Proof of Theorem~\ref{theorem:mainresult}~(c)}
Let $\tau \in (0,T)$. Let $0 \leq i \leq M-1$ such that $\tau \in [t_{i},t_{i+1})$. To simplify the notation, we set $\ff^i := \ff \left( t_i \right)$ with $i = 0,\dots,M$. Let $\ff_k$, $\ff_k^+$, $\ff_k^-$, and $\overline{\ff}_k$ the corresponding postprocessing~\eqref{eq:discretefunctions}. For $j \in \{0,1,\dots,M\}$, we obtain
\begin{eqnarray*}
&&\hspace*{-1cm} \EE{\mm_h^{j+1}}{\ff^{j+1}} - \EE{\mm_h^j}{\ff^j} \\
&\stackrel{\eqref{eq:energyfunctional}}{=} & \frac{\Cex k}{2} \dtshort \norm{\nabla\mm_h^{j+1}}{\LL{2}{\Omega}}^2 -
 \frac{1}{2} \prod{\ppi{\mm_h^{j+1}}}{\mm_h^{j+1}} + \frac{1}{2}\prod{\ppi{\mm_h^j}}{\mm_h^j} 
\\&&\qquad-\prod{\ff^{j+1}}{\mm_h^{j+1}} + \prod{\ff^j}{\mm_h^j} \\
& \stackrel{\eqref{eq:discreteenergy1}}{=} & -\alpha k \norm{\dth{\mm}{j}}{h}^2 
{- \frac{1}{2}\prod{\ppi{\mm_h^{j+1}}}{\mm_h^{j+1}} + \frac{1}{2}\prod{\ppi{\mm_h^j}}{\mm_h^j} + k \prodh{\dth{\mm}{j}}{\Ppih{ \ppihmj }}}\\
&& \qquad {- \prod{ \ff^{j+1}}{\mm_h^{j+1}} + \prod{ \ff^j}{\mm_h^j} + k \prodh{\dth{\mm}{j}}{\Ppih{ \halfh{\ff}{j} }}}
\\
&=:& -\alpha k \norm{\dth{\mm}{j}}{h}^2 + T_1 + T_2
\end{eqnarray*}
First, we consider $T_1$. Since $\boldsymbol{\pi}$ is linear and self-adjoint, simple calculations reveal that
\begin{align*}
&T_1 
\stackrel{\eqref{eq:Ppih}}{=} 
 - \frac{1}{2} \prod{\ppi{\mm_h^{j+1}}}{\mm_h^{j+1}} + \frac{1}{2}\prod{\ppi{\mm_h^j}}{\mm_h^j} + k \prod{\dth{\mm}{j}}{\ppihmj} 
\\&
= k \prod{\dth{\mm}{j}}{\ppihmj\!-\! \ppi{\midh{\mm}{j}}}
 \!-\! \frac{1}{2}\prod{\ppi{\mm_h^{j+1}}}{\mm_h^{j+1}} \!+\! \frac{1}{2}\prod{\ppi{\mm_h^j}}{\mm_h^j} + k \prod{\dth{\mm}{j}}{\ppi{\midh{\mm}{j}}}
\\&
= k \prod{\dth{\mm}{j}}{\ppihmj- \ppi{\midh{\mm}{j}}}.
\end{align*}
For $T_2$, we proceed similarly and obtain
\begin{eqnarray*}
T_2 
& \stackrel{\eqref{eq:Ppih}}{=} &
 -\prod{ \ff^{j+1}}{\mm_h^{j+1}} + \prod{ \ff^j}{\mm_h^j} + k \prod{\dth{\mm}{j}}{\halfh{\ff}{j}} \\
& = &k \prod{\dth{\mm}{j}}{\halfh{\ff}{j} - \mid{\ff}{j}}
 \underbrace{-\prod{ \ff^{j+1}}{\mm_h^{j+1}} + \prod{ \ff^j}{\mm_h^j} + k \prod{\dth{\mm}{j}}{\mid{\ff}{j}}}_{=:T_{21}}.
\end{eqnarray*}
Since
\begin{equation*}
\begin{split}
T_{21} & = -\prod{ \ff^{j+1}}{\mm_h^{j+1}} + \prod{ \ff^j}{\mm_h^j} + \frac{1}{2} \prod{\mm_h^{j+1}-\mm_h^j}{\ff^{j+1}+\ff^j} \\
& = - \frac{1}{2} \prod{\mm_h^{j+1}+\mm_h^j}{\ff^{j+1}-\ff^j}
= - k \prod{d_t\ff^{j+1}}{\midh{\mm}{j}},
\end{split}
\end{equation*}
we obtain
\begin{equation*}
\begin{split}
T_2 & = k \prod{\dth{\mm}{j}}{\halfh{\ff}{j} - \mid{\ff}{j}} - k \prodh{d_t \ff^{j+1}}{\midh{\mm}{j}}
\end{split}.
\end{equation*}
With these preliminary computations, we altogether obtain
\begin{equation*}
\begin{split}
&\EE{\mm_h^{j+1}}{\ff^{j+1}} - \EE{\mm_h^j}{\ff^j} + \alpha k \norm{ \dth{\mm}{j} }{h}^2 + k \prod{d_t\ff^{j+1}}{\midh{\mm}{j}} \\
& \qquad = k \prod{\dth{\mm}{j}}{\ppihmj - \ppi{\midh{\mm}{j}}} + k \prod{\dth{\mm}{j}}{\halfh{\ff}{j} - \mid{\ff}{j}}.
\end{split}
\end{equation*}
Summing over $0 \leq j \leq i$, the telescopic series proves
\begin{align*}
\begin{split}
& \EE{\mm_h^{i+1}}{\ff^{i+1}} + \alpha k \Sum{j=0}{i} \norm{ \dth{\mm}{j} }{h}^2 + k \Sum{j=0}{i} \prod{d_t\ff^{j+1}}{\midh{\mm}{j}} \\
&= \EE{\mm_h^0}{\ff^0} + k \Sum{j=0}{i} \prod{\dth{\mm}{j}}{\ppihmj - \ppi{\midh{\mm}{j}}}
+ k \Sum{j=0}{i} \prod{\dth{\mm}{j}}{\halfh{\ff}{j} - \mid{\ff}{j}}.
\end{split}
\end{align*}
This is equivalently written as
\begin{align}\label{eq:energyend1}
\begin{split}
& \EE{\mm_{hk}^{+} (\tau)}{\ff_k^{+}(\tau)} + \alpha \Int{0}{\tau} \norm{ \partial_t \mm_{hk} }{\LL{2}{\Omega}}^2 \d{t} + 
\Int{0}{t_{i+1}} \prod{\partial_t \ff_k}{\overline{\mm}_{hk}} \d{t}  \\
& \stackrel{\eqref{eq:normhequiv1}}{\leq} \EE{\mm_h^0}{\ff^0} + 
\Int{0}{t_{i+1}} \prod{\partial_t\mm_{hk}}{\ppihkm - \ppi{\overline{\mm}_{hk}}} \d{t} + \Int{0}{t_{i+1}} \prod{\partial_t \mm_{hk}}{\widehat{\ff}_{hk} - \overline{\ff}_k } \d{t}.
\end{split}
\end{align}
According to strong $\LL{2}{\Omega}$ convergence and no-concentration of Lebesgue functions (together with $\tau<t_{i+1}\le \tau+k$),
it holds that
\begin{align*}
 \Int{0}{t_{i+1}} \prod{\partial_t \ff_k}{\overline{\mm}_{hk}} \d{t}
 \longrightarrow \Int{0}{\tau} \prod{\partial_t \ff}{\mm} \d{t}.
\end{align*}
Together with strong convergence $\mm_h^0\to\mm^0$ of the initial data in $\HH{1}{\Omega}$, the same argument  implies
\begin{align*}
\EE{\mm_h^0}{\ff^0} + 
\Int{0}{t_{i+1}} \!\!\!\prod{\partial_t\mm_{hk}}{\ppihkm - \ppi{\overline{\mm}_{hk}}} \d{t} + \Int{0}{t_{i+1}} \!\!\!\prod{\partial_t \mm_{hk}}{\widehat{\ff}_{hk} - \overline{\ff}_k } \d{t}
\longrightarrow \EE{\mm^0}{\ff^0}.
\end{align*}
Weakly lower semicontinuity with respect to $\mm_{hk}^+$ proves, for all measurable $I\subseteq[0,T]$,
\begin{align*}
 &\int_{I}\Big(\mathcal E(\mm(\tau),\ff(\tau)) + \alpha \Int{0}{\tau} \norm{ \partial_t \mm }{\LL{2}{\Omega}}^2 \d{t}\Big)\d{\tau} 
 \\&\quad
 \le \liminf_{(h,k)\to(0,0)} \int_{I}\Big(\EE{\mm_{hk}^{+} (\tau)}{\ff_k^{+}(\tau)}\d{\tau}
 + \alpha \Int{0}{\tau} \norm{ \partial_t \mm_{hk} }{\LL{2}{\Omega}}^2 \d{t}\Big)\d{\tau}.
\end{align*}
Overall,~\eqref{eq:energyend1} thus leads to 
\begin{align*}
&\int_{I}\Big(\mathcal E(\mm(\tau),\ff(\tau)) + \alpha \Int{0}{\tau} \norm{ \partial_t \mm }{\Omega}^2 \d{t} + \Int{0}{\tau} \prod{\partial_t \ff}{\mm} \d{t}\Big)\d{\tau}
\le \int_{I}\EE{\mm^0}{\ff^0}\,\d{\tau}.
\end{align*}
Since $I\subseteq[0,T]$ was an arbitrary measurable subset, we obtain the estimate $\le$ pointwise almost everywhere in $[0,T]$ for the integrand. This verifies Definition~\ref{def:weak}~\ref{item:weak5} and concludes the proof of Theorem~\ref{theorem:mainresult}~(b).\qed

\def\aa{\mathbf{a}}
\section{Lower-order effective field contributions}\label{section:discussion}
In this section, we discuss some concrete examples for the general effective field contribution modeled by the operator $\boldsymbol{\pi}:\HH{1}{\Omega} \cap \LL{\infty}{\Omega} \rightarrow \LL{2}{\Omega}$.
We show that both the operators and their numerical approximations are covered by the abstract framework of Section~\ref{section:mainresult}.
We recall the notation introduced in Section~\ref{subsection:extendedmidpoint}:
For each $\boldsymbol{\pi}$ we denote by $\boldsymbol{\pi}_h$ the corresponding approximation, i.e., $\boldsymbol{\pi}_h(\vvarphi_h) \approx \ppi{\vvarphi_h}$ for any $\vvarphi_h\in\Vh$.
This is then used to define the generalized approximate operator $\boldsymbol{\Pi}_h$; see, e.g., \eqref{eq:different_approaches}.

\subsection{Classical contributions} \label{section:classicalcontributions}
The most common effective field contributions in micromagnetics are exchange, applied external field, magnetocrystalline anisotropy, and stray field, which already allow to describe a large variety of phenomena (cf. \cite{Hubert1998}).
The terms $\Cex \Delta \mm$ and $\ff$ of the abstract effective field~\eqref{eq:effective} clearly refer to the first two contributions.

In the case of uniaxial magnetocrystalline anisotropy, given the easy axis $\aa \in \R^3$ satisfying $|\aa| = 1$, we consider the operators
\begin{align*}
\boldsymbol{\pi}_h (\mm):= \boldsymbol{\pi} (\mm) := (\aa \cdot \mm) \aa \in \LL{2}{\Omega}
\end{align*}
defined for all $\mm \in \LL{2}{\Omega}$.
It is straightforward to show that, as far as the uniaxial anisotropy is concerned, all approaches~\eqref{eq:different_approaches} for the generalized approximation operator $\boldsymbol{\Pi}_h$ satisfy the assumptions of Theorem~\ref{theorem:mainresult}~{\rm (c)} and Remark~\ref{remark:consistency}.

General magnetocrystalline anisotropies $\boldsymbol{\pi}(\mm):= \nabla \phi(\mm)$ with $\phi \in C^1(\R^3)$ can be treated as in \cite[Section 4.1]{Bruckner2014} and satisfy the assumptions of Theorem \ref{theorem:mainresult}~{\rm (b)} and Remark \ref{remark:consistency}.

As for the stray field, it holds that $\ppi{\mm}= -\nabla u \vert_{\Omega}$, where the magnetostatic potential $u \in H^1(\R^3)$ solves the full space transmission problem
\begin{subequations} \label{eq:magnetostatic}
\begin{alignat}{2}
-\Delta u &= -\diver \mm &\quad& \textrm{in } \Omega, \\
-\Delta u &= 0 && \textrm{in } \R^3\setminus\overline{\Omega}, \\
u^{\mathrm{ext}} - u^{\mathrm{int}} &= 0 && \textrm{on } \partial \Omega, \\
(\nabla u^{\mathrm{ext}} - \nabla u^{\mathrm{int}})\cdot{\nn} &= -\mm\cdot\nn && \textrm{on } \partial \Omega, \\
u(\boldsymbol{x}) &= \mathcal{O}(\vert \boldsymbol{x} \vert^{-1}) && \textrm{as } \vert \boldsymbol{x} \vert \to \infty.
\end{alignat}
\end{subequations}
Here, the superscripts \emph{ext} and \emph{int} refer to the traces of $u$ on $\partial \Omega$ with respect to the exterior domain $\R^3 \setminus \overline{\Omega}$ and the interior domain $\Omega$, respectively, and $\nn$ is the outer normal vector on $\partial \Omega$ which points to $\R^3 \setminus \overline{\Omega}$. It can be shown that $\boldsymbol{\pi}: \LL{2}{\Omega} \to \LL{2}{\Omega}$ is a linear, bounded, and self-adjoint operator (cf. \cite[Proposition~3.1]{Pra04}).
However, $\boldsymbol{\pi}$ is nonlocal and behind the discrete operator $\boldsymbol{\pi}_h$ there is an effective discretization method for the transmission problem.
As an example, we consider the hybrid FEM-BEM approach from~\cite{Fredkin1990}; see also~\cite[Section 4.4.1]{Bruckner2014} for more details.
The starting point is the decomposition $\boldsymbol{\pi}(\mm) = -\nabla u \vert_{\Omega} = -\nabla u_1 - \nabla u_2$, where $u_1,u_2 \in \H{1}{\Omega}$ are the weak solutions of the boundary value problems
\begin{subequations}\label{eq:fredkin}
\begin{alignat}{2}
\Delta u_1 &= \diver \mm &\quad& \textrm{in } \Omega, \\
\partial_n u_1 &= \mm \cdot \nn && \textrm{on } \partial \Omega,
\end{alignat}
and
\begin{alignat}{2}
\Delta u_2 &= 0 &\quad& \textrm{in } \Omega, \\
\label{eq:fredkin4}
u_2 &= (K- 1/2) (u_1|_{\partial \Omega}) && \textrm{on } \partial \Omega,
\end{alignat}
\end{subequations}
respectively, where
\begin{align}\label{eq:doublelayer}
K \big( u_1|_{\partial \Omega} \big) (\xx) := 
\frac{1}{4\pi} \int_{\partial \Omega} \frac{(\xx-\yy)\cdot \nn(\yy)}{| \xx - \yy |^3} u_1^{\mathrm{int}}(\yy) \d{S(\yy)}
\end{align}
is the double-layer integral operator associated with the Laplace problem; see, e.g., \cite[Chapter 3]{ss2011}.

Taking the characterization~\eqref{eq:fredkin} into account, an effective approximation of $\boldsymbol{\pi}(\mm)$ can be obtained with the following algorithm.
\begin{algorithm} \label{alg:fk}
Input: Approximation $\mm_h \in \Vh$ of $\mm\in\LL{2}{\Omega}$.
\begin{enumerate}[label=\rm{(\roman*)}]
\item Compute $u_{1,h} \in V_h^{\star}:= \{ v_h \in V_h: \int_{\Omega} v_h \d{x} = 0 \}$ such that
\begin{align*}
\prod{\nabla u_{1,h}}{\nabla v_h} = \prod{\mm_h}{\nabla v_h} \quad \textrm{for all } v_h \in V_h^{\star}.
\end{align*}
\item Compute $g_h \in V_h^{\partial \Omega} := \{ v_h|_{\partial \Omega} : v_h \in V_h \}$ via the relation
\label{item:step2}
\begin{align*}
\prod{g_h}{v_h}_{\partial \Omega} = \prod{(K - 1/2)(u_{1,h}|_{\partial \Omega})}{v_h}_{\partial \Omega} \quad \textrm{for all } v_h \in V_h^{\partial \Omega}.
\end{align*}
\item Compute $u_{2,h} \in V_h$ such that $u_{2,h}|_{\partial \Omega} = g_h$ and
\begin{align*}
\prod{\nabla u_{2,h}}{\nabla v_h} = 0 \quad \textrm{for all } v_h \in V_h^0 := \{ v_h \in V_h: v_h|_{\partial \Omega} = 0 \}.
\end{align*}
\item Define $\boldsymbol{\pi}_h(\mm_h) := -\nabla u_{1,h} - \nabla u_{2,h} \in \LL{2}{\Omega}$.
\end{enumerate}
Output: Approximation $\boldsymbol{\pi}_h(\mm_h)$ of $\boldsymbol{\pi}(\mm)$.
\end{algorithm}
While~\cite[Section 4.4.1]{Bruckner2014} employed the Scott--Zhang projection from~\cite{Scott1990} in step~\ref{item:step2} of Algorithm~\ref{alg:fk}, we found in our numerical experiments that the $\LL{2}{\partial \Omega}$-orthogonal projection onto $V_h^{\partial \Omega}$ leads to better results on coarse meshes. Since, on quasi-uniform meshes, the $L^2$-orthogonal projection is $H^1$-stable and satisfies a first-order approximation property, the result of~\cite[Proposition~4.2]{Bruckner2014} remains valid; see also \cite[Section 4]{Goldenits2012}.
In particular, it follows that the approaches~\eqref{eq:different_approaches} for the generalized approximation operator $\boldsymbol{\Pi}_h$ fulfill the assumptions in Remark~\ref{remark:consistency}, even with strong convergence in~\eqref{eq:pihweak_alt}.
Overall, the stray field approximation of \cite{Fredkin1990} in the sense of Algorithm~\ref{alg:fk} thus fits in the setting of Theorem \ref{theorem:mainresult}~{\rm (c)}.

\subsection{Zhang--Li model for current-driven domain wall motion}\label{subsection:zl}
To take the transfer of the spin angular momentum between the local magnetization and spin-polarized currents into account, various extensions of the micromagnetic model have been considered.
In~\cite{tnms2005,zl2004}, the authors propose to add an additional torque term to LLG, which allows to model the current-driven motions of domain walls.
This extended LLG equation is usually referred to as Zhang--Li model.
Given the spin velocity vector $\vv \in \boldsymbol{C}(\overline{\Omega})$ and the constant $\xi>0$ (ratio of nonadiabaticity), the corresponding operator $\boldsymbol{\pi}:\HH{1}{\Omega} \cap \LL{\infty}{\Omega} \rightarrow \LL{2}{\Omega}$ (and its discretization) is defined by
\begin{align}\label{eq:defppizl}
\boldsymbol{\pi}_h(\mm) := \ppi{\mm} := \mm\times(\vv\cdot\nabla)\mm + \xi(\vv\cdot\nabla)\mm \quad \text{for all } \mm \in \HH{1}{\Omega} \cap \LL{\infty}{\Omega},
\end{align}
where $[(\vv\cdot\nabla)\mm]_j = \sum_{k=1}^3 v_k \partial_k m_j$ for all $j=1,2,3$.
In the mathematical literature, existence of (weak) solutions for an extended form of LLG with Zhang--Li spin transfer torque was studied in~\cite{Melcher2013}.
We now show that the operator~\eqref{eq:defppizl} satisfies the assumptions of Theorem~\ref{theorem:mainresult} {\rm (a)--(b)}. Note that part~\textrm{(c)} is clearly excluded, because $\boldsymbol{\pi}$ is nonlinear. It holds that
\begin{equation*}
\norm{\boldsymbol{\pi}_h(\mm_h)}{\LL{2}{\Omega}}
\leq \norm{\vv}{\LL{\infty}{\Omega}} \left(\xi + \norm{\mm_h}{\LL{\infty}{\Omega}}\right) \norm{\nabla\mm_h}{\LL{2}{\Omega}} \quad \textrm{for all }\mm_h \in \Vh.
\end{equation*}
This shows that, for any of the three approaches~\eqref{eq:different_approaches}, the generalized operator $\boldsymbol{\Pi}_h$ satisfies stability~\eqref{eq:pihbounded}. As for the consistency condition~\eqref{eq:pihweak}, let $\mm_{hk}^{\star} \in  \{ \mm_{hk} , \mm_{hk}^+,\mm_{hk}^-,\overline{\mm}_{hk}, \mm_{hk}^{=} \}$ be the postprocessed output~\eqref{eq:discretefunctions} of Algorithm \ref{alg:midpoint} and let $\mm\in\HH{1}{\Omega_T}$ be their common weak limit obtained from Theorem~\ref{theorem:mainresult}~\textrm{(a)}.
From the convergence properties of Lemma~\ref{lemma:extractsubsequences}, it follows that $(\vv \cdot \nabla) \mm^{\star}_{hk} \rightharpoonup (\vv \cdot \nabla) \mm$ in $\LL{2}{\Omega_T}$ and $\mm_{hk}^{\star} \times \vvarphi \rightarrow \mm \times \vvarphi$ in $\LL{2}{\Omega_T}$ for all $\vvarphi \in \LL{2}{\Omega_T}$. This implies
\begin{align*}
\mm_{hk}^{\star} \times (\vv \cdot \nabla) \mm_{hk}^{\star} \rightharpoonup \mm \times ( \vv \cdot \nabla ) \mm
\quad \textrm{and} \quad
(\vv \cdot \nabla) \mm^{\star}_{hk} \rightharpoonup (\vv \cdot \nabla) \mm
\quad \text{in } \LL{2}{\Omega_T}.
\end{align*}
This proves~\eqref{eq:pihweak}, so that the framework of Theorem~\ref{theorem:mainresult}~\textrm{(b)} applies.

\def\rr{\boldsymbol{r}}

\section{Iterative solution of nonlinear system}

Each time-step of Algorithm~\ref{alg:midpoint} requires the numerical solution of the nonlinear system~\eqref{eq:newmidpoint}. To that end, we follow \cite{Bartels2006} and employ the fixed-point iteration of the following algorithm. Up to a different stopping criterion, similar algorithms are also proposed in \cite{B06,Banas2008,Bavnas2008/09}.

\begin{algorithm}[Midpoint scheme with inexact solver]\label{alg:effectivemidpoint}
Input: Approximation $\mm_h^{-1} :=\mm_h^0 \in\Vh$ of initial condition $\mm^0$, $\hh_h^0 := \Cex \Delta_h \mm_h^0 + \Ppih{ \ppih{\mm_h^0}{\mm_h^0}{\mm_h^{-1}} } +  \Ppih{\ff_h^{\frac12}}$, tolerance $\epsilon>0$. 
\newline
Loop: For $0\le i\le M-1$, iterate the following steps {\rm (i)}--{\rm (iv)}:
\newline
{\rm (i)} Define $\eeta_h^{i,0} := \mm_h^i$, $\hh_h^{i,0} := \hh_h^i$.
\newline
{\rm (ii)} For $n=0,1,2,\dots$, repeat the following steps {\rm (ii-a)}--{\rm (ii-b)} until $\norm{\hh_h^{i,n+1} - \hh_h^{i,n}}{h} \leq \epsilon$:

{\rm (ii-a)} Find $\eeta_h^{i,n+1} \in \Vh$ such that, for all $\vvarphi_h \in \Vh$, it holds that
\begin{align}\label{eq:neweffectivemidpoint}
\frac{2}{k} \prodh{\eeta_h^{i,n+1}}{\vvarphi_h} + \prodh{ \eeta_h^{i,n+1} \times \hh_h^{i,n}}{\vvarphi_h} + 
\frac{\alpha}{2k} \prodh{\eeta_h^{i,n+1} \times \mm_h^i}{\vvarphi_h} = \frac{2}{k} \prodh{\mm_h^{i}}{\vvarphi_h}.
\end{align}

{\rm (ii-b)} Compute $\hh_h^{i,n+1} := \Cex \Delta_h \eeta_h^{i,n+1} +  \Ppih{ \ppih{2\eeta_h^{i,n+1} - \mm_h^i}{\mm_h^i}{ \mm_h^{i-1}} } + \Ppih{\ff_h^{i + \frac12}}$.
\newline
{\rm (iii)} \label{item:finalize} Define $\mm_h^{i+1} := 2 \eeta_h^{i,n+1} - \mm_h^i$.
\newline
{\rm (iv)} Compute $\hh_h^{i+1} := \Cex \Delta_h \mm_h^{i+1} + \Ppih{ \ppih{\mm_h^{i+1}}{\mm_h^{i+1}}{\mm_h^{i}} } +  \Ppih{\ff_h^{i+\frac12}}$.
\newline
Output: Sequence $\mm_h^i$ of approximations to $\mm(t_i)$ for all $i = 0,1,\dots, M$. \qed
\end{algorithm}

\begin{remark}\label{remark:propalgeffective}
We state some elementary properties of Algorithm \ref{alg:effectivemidpoint}. 

{\rm (i)} \label{item:propalgeffective1} The Lax--Milgram theorem yields that the linear system~\eqref{eq:neweffectivemidpoint} admits a unique solution. Let $\phi_{\ell}$ be the nodal basis function corresponding to some node $\zz_{\ell} \in \Nh$. Testing~\eqref{eq:neweffectivemidpoint} with $\vvarphi_h:= \eeta_h^{i,n+1}(\zz_{\ell}) \phi_{\ell} \in \Vh$, we obtain
\begin{align}
\frac{2\beta_{\ell}}{k} | \eeta_h^{i,n+1}(\zz_{\ell}) |^2 = 
\frac{2\beta_{\ell}}{k} \eeta_h^{i,n+1}(\zz_{\ell}) \cdot \mm_h^i (\zz_{\ell}) \leq 
\frac{2\beta_{\ell}}{k} | \eeta_h^{i,n+1}(\zz_{\ell}) | | \mm_h^i (\zz_{\ell}) |.
\end{align}
This proves $| \eeta_h^{i,n+1}(\zz_{\ell}) | \leq | \mm_h^i (\zz_{\ell}) |$ for all nodes $\zz_{\ell} \in \Nh$ and all $n \geq 0$. In particular, this yields $\norm{\eeta_h^{i,n+1}}{\infty} \leq \norm{\mm_h^i}{\infty}$ for all $n \geq 0$. 

{\rm (ii)}  \label{item:propalgeffective3} In contrast to Algorithm \ref{alg:midpoint}, the sequence $\left(\mm_h^i\right)_{i=0}^M$ from Algorithm \ref{alg:effectivemidpoint} satisfies 
\begin{align}\label{eq:newsecondmidpoint}
\prodh{\dth{\mm}{i}}{\vvarphi_h} &= 
-\Cex \prodh{ \midh{\mm}{i} \times \Deltah{ \midh{\mm}{i} }}{\vvarphi_h} - 
\prodh{\midh{\mm}{i} \times \Ppih{ \ppih{\mm_h^{i+1}}{\mm_h^i}{ \mm_h^{i-1}} }}{\vvarphi_h} 
\notag\\
&\quad -\prodh{\midh{\mm}{i} \times \Ppih{\halfh{\ff}{i}} }{\vvarphi_h} +
\alpha \prodh{\midh{\mm}{i} \times \dth{\mm}{i}}{\vvarphi_h}
 + \prodh{\midh{\mm}{i} \times \rr_h^{i,n} }{\vvarphi_h}.
\end{align}
for all $\vvarphi_h \in \Vh$, where $\rr_h^{i,n+1} := \hh_h^{i,n+1} - \hh_h^{i,n}$ and $n \geq 0$. As in Proposition \ref{prop:solvable}, the variational formulation~\eqref{eq:newsecondmidpoint} implies $\abs{\mm_h^{i+1} \lr{\zz_{\ell}} } = \abs{\mm_h^{i} \lr{\zz_{\ell}} }$ for all nodes $\zz_{\ell} \in \Nh$. 

{\rm (iii)}  \label{item:additionalassumptions} Under the assumption $k = \mathbf{o} (h^2)$, \cite[Lemma 4.1]{Bartels2006} proves that step {\rm (ii)} of Algorithm \ref{alg:effectivemidpoint} gives rise to a contraction $\Phi: \Vh \rightarrow \Vh$, where the contraction property holds with respect to $\norm{\cdot}{h}$.
In the presence of lower-order terms, the argument additionally requires the stability condition
\begin{align}\label{eq:additionalassumption}
\begin{split}
& \norm{\ppih{2\eeta_h^{i,n+1} - \mm_h^i}{\mm_h^i}{ \mm_h^{i-1}} - \ppih{2\eeta_h^{i,n} - \mm_h^i}{\mm_h^i}{ \mm_h^{i-1}} }{\LL{2}{\Omega}} \\
&\qquad \lesssim h^{-1} \norm{\eeta_h^{i,n+1} - \eeta_h^{i,n}}{\HH{1}{\Omega}},
\end{split}
\end{align}
which is satisfied for the approaches~\eqref{eq:different_approaches} if $\boldsymbol{\pi}_h: \HH{1}{\Omega} \rightarrow \LL{2}{\Omega}$ is Lipschitz continuous.
Then, the Banach fixed-point theorem applies and proves that $(\eeta_h^{i,n})_{n > 0}$ converges in $\LL{2}{\Omega}$ to a solution $\eeta_h^{i,\infty} \in \Vh$ and that $2\eeta_h^{i,\infty}-\mm_h^i \in \Vh$ solves~\eqref{eq:newmidpoint}.

{\rm (iv)} In consequence of {\rm (ii)}, it holds that
\begin{align}\label{eq:conviterates}
\norm{\eeta_h^{i,n+1} - \eeta_h^{i,n}}{\LL{2}{\Omega}} \longrightarrow 0.
\end{align}
In contrast to Algorithm \ref{alg:effectivemidpoint}, \cite[Algorithm 4.1]{Bartels2006} as well as \cite{Banas2008,Bavnas2008/09} use the stopping criterion $\norm{\eeta_h^{i,n+1} - \eeta_h^{i,n}}{\LL{2}{\Omega}} < \epsilon$. \cite[Algorithm A]{B06} uses the same stopping criterion as Algorithm~\ref{alg:effectivemidpoint}, however, only $\heff = \Cex \Delta \mm$ is considered. We note that our stopping criterion generically leads to less iterations. Together with~\eqref{eq:additionalassumption}, the inverse inequality yields that
\begin{align}\label{eq:solverstep}
\norm{\hh_h^{i,n+1} - \hh_h^{i,n}}{h} &\stackrel{\phantom{\eqref{eq:additionalassumption}}}{\lesssim}  
\norm{\Delta_h \eeta_h^{i,n+1} - \Delta_h \eeta_h^{i,n}}{h} \notag \\
& \quad + \norm{\ppih{2\eeta_h^{i,n+1} - \mm_h^i}{\mm_h^i}{ \mm_h^{i-1}} - \ppih{2\eeta_h^{i,n} - \mm_h^i}{\mm_h^i}{ \mm_h^{i-1}} }{\LL{2}{\Omega}} \notag \\
& \stackrel{\eqref{eq:additionalassumption}}{\lesssim} 
h^{-2} \norm{\eeta_h^{i,n+1} - \eeta_h^{i,n}}{\LL{2}{\Omega}} + 
h^{-1} \norm{\eeta_h^{i,n+1} - \eeta_h^{i,n}}{\HH{1}{\Omega}} \notag \\
& \stackrel{\phantom{\eqref{eq:additionalassumption}}}{\lesssim} h^{-2} \norm{\eeta_h^{i,n+1} - \eeta_h^{i,n}}{\LL{2}{\Omega}}.
\end{align}
Hence, the stopping criterion of \cite{Bartels2006,B06,Banas2008,Bavnas2008/09} implies the one used in Algorithm \ref{alg:effectivemidpoint}. Moreover, convergence $\norm{\eeta_h^{i,n+1} - \eeta_h^{i,n}}{\LL{2}{\Omega}} \rightarrow 0$ together with ~\eqref{eq:solverstep} proves that the repeat loop in step {\rm (ii)} of Algorithm \ref{alg:effectivemidpoint} terminates for some $n \in \N$. 

{\rm (v)} \label{item:firstsolverpih} For the Adams--Bashforth approach~\eqref{eq:appproach_ab} and the explicit Euler approach~\eqref{eq:approach_ee}, it holds that
\begin{align*}
\ppih{2\eeta_h^{i,n+1} - \mm_h^i}{\mm_h^i}{ \mm_h^{i-1}} - \ppih{2\eeta_h^{i,n} - \mm_h^i}{\mm_h^i}{ \mm_h^{i-1}} = \0,
\end{align*}
and~\eqref{eq:additionalassumption} is fulfilled (independently of $\boldsymbol{\pi}_{h}$ and $\boldsymbol{\pi}$).

{\rm (vi)} For uniaxial magnetocrystalline anisotropy and the stray field, we have proved in Section \ref{section:classicalcontributions} that we are in the situation of Theorem \ref{theorem:mainresult}~{\rm (c)}. Moreover the operator $\boldsymbol{\pi}_h:\LL{2}{\Omega} \rightarrow \LL{2}{\Omega}$ is linear and continuous in these cases. Hence,~\eqref{eq:additionalassumption} holds also for the implicit midpoint rule~\eqref{eq:approach_mp} even in the stronger form with $\norm{\eeta_h^{i,n+1} - \eeta_h^{i,n}}{\LL{2}{\Omega}}$ on the right-hand side.

{\rm (vii)} Finally, we show that~\eqref{eq:additionalassumption} is also satisfied for the Zhang--Li model from Section~\ref{subsection:zl} and the implicit midpoint approach~\eqref{eq:approach_mp}: According to {\rm (i)}, it holds that $\norm{\eeta_h^{i,n+1}}{\infty} \leq \norm{\mm_h^i}{\infty} = \norm{\mm_h^0}{\infty}$ for all $n \geq 0$. Together with the inverse inequality, we obtain
\begin{align*}
& \norm{\ppih{2\eeta_h^{i,n+1} - \mm_h^i}{\mm_h^i}{ \mm_h^{i-1}} - \ppih{2\eeta_h^{i,n} - \mm_h^i}{\mm_h^i}{ \mm_h^{i-1}} }{\LL{2}{\Omega}} \notag \\
& \quad \stackrel{\eqref{eq:defppizl}}{=} \norm{\eeta_h^{i,n+1}\times(\vv\cdot\nabla)\eeta_h^{i,n+1} + \xi(\vv\cdot\nabla)\eeta_h^{i,n+1} - \eeta_h^{i,n}\times(\vv\cdot\nabla)\eeta_h^{i,n} - \xi(\vv\cdot\nabla)\eeta_h^{i,n}}{\LL{2}{\Omega}}\\
& \quad \stackrel{\phantom{\eqref{eq:pihbounded}}}{\lesssim} 
\norm{ \mm_h^0 }{\LL{\infty}{\Omega}} \norm{ \nabla \eeta_h^{i,n+1} - \nabla \eeta_h^{i,n} }{\LL{2}{\Omega}} + 
h^{-1} \norm{ \mm_h^0 }{\LL{\infty}{\Omega}} \norm{ \eeta_h^{i,n+1} - \eeta_h^{i,n} }{\LL{2}{\Omega}} \notag \\
& \quad \stackrel{\phantom{\eqref{eq:pihbounded}}}{\lesssim} 
h^{-1} \norm{ \eeta_h^{i,n+1} - \eeta_h^{i,n} }{\LL{2}{\Omega}},
\end{align*}
i.e.,~\eqref{eq:additionalassumption} holds even in a stronger form.

\end{remark}

The following theorem extends Theorem \ref{theorem:mainresult} to Algorithm \ref{alg:effectivemidpoint}, where the error from the inexact solver is taken into account. The proof follows along the arguments of Section \ref{section:mainresult}.

\begin{theorem}\label{remark:mainremark}
Let the assumptions from Theorem \ref{theorem:mainresult}~{\rm (b)} and the additional assumptions from Remark \ref{remark:propalgeffective} {\rm (iv)} be fulfilled, i.e., it holds that $k = \mathbf{o}(h^2)$ as well as stability~\eqref{eq:additionalassumption}. Then, there hold the following assertions {\rm (a)}--{\rm (b)}.

{\rm (a)} \label{item:mainremark1} As $h,k,\epsilon \rightarrow 0$, there exists a subsequence of the postprocessed output $\mm_{hk}$ of Algorithm \ref{alg:effectivemidpoint} which converges weakly in $\HH{1}{\Omega_T}$ to some limit $\mm \in \HH{1}{\Omega_T}$ which is a weak solution to LLG in the sense of Definition \ref{def:weak}~\ref{item:weak1}--\ref{item:weak3}.

{\rm (b)} \label{item:mainremark2} In addition, suppose the assumptions of Theorem \ref{theorem:mainresult}~{\rm (c)}. Then, $\mm \in \HH{1}{\Omega_T}$ from~{\rm (a)} is a physical weak solution in the sense of Definition \ref{def:weak}~\ref{item:weak1}--\ref{item:weak5}.
\qed
\end{theorem}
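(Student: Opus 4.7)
The plan is to mimic the proof of Theorem~\ref{theorem:mainresult} almost verbatim, treating the perturbed variational formulation~\eqref{eq:newsecondmidpoint} as the starting point. The key new ingredient is that the extra term $\prodh{\midh{\mm}{i} \times \rr_h^{i,n}}{\vvarphi_h}$ must be controlled uniformly in $(h,k,\epsilon)$ and shown to vanish in the limit $\epsilon \to 0$. The stopping criterion of Algorithm~\ref{alg:effectivemidpoint} guarantees $\norm{\rr_h^{i,n}}{h} \leq \epsilon$ for the accepted iterate at every time-step $i$, which is the only information we will use about the inexact solver.

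\emph{Step 1 (perturbed discrete energy identity).} Following the lines of Lemma~\ref{lemma:dicreteenergy}, I would test~\eqref{eq:newsecondmidpoint} first with $\dth{\mm}{i}$ and then with $\hh_h^{i+\frac12}$, as in the original proof. The testing with $\dth{\mm}{i}$ makes the perturbation drop out thanks to the cross product, so~\eqref{eq:specialtestenergy1} still holds. The testing with $\hh_h^{i+\frac12}$ now produces an extra term $\prodh{\midh{\mm}{i} \times \rr_h^{i,n}}{\hh_h^{i+\frac12}}$, which, after combining the two identities with factor $\alpha$, yields the perturbed identity
\begin{equation*}
\frac{\Cex}{2} \dtshort \norm{\nabla \mm_h^{i+1}}{\LL{2}{\Omega}}^2
+ \alpha \norm{\dth{\mm}{i}}{h}^2
= \prod{\dth{\mm}{i}}{\ppihmi + \halfh{\ff}{i}}
+ \prodh{\midh{\mm}{i} \times \rr_h^{i,n}}{\dth{\mm}{i}}.
\end{equation*}
A telescopic sum yields the analogue of~\eqref{eq:discreteenergy2}.

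\emph{Step 2 (uniform boundedness and convergent subsequences).} The additional sum $k\sum_{j=0}^{i} |\prodh{\midh{\mm}{j} \times \rr_h^{j,n}}{\dth{\mm}{j}}|$ is handled by Young's inequality together with Proposition~\ref{prop:solvable} (which still holds for Algorithm~\ref{alg:effectivemidpoint}, cf.\ Remark~\ref{remark:propalgeffective}(ii)) and the stopping criterion:
\begin{equation*}
k\sum_{j=0}^{i} |\prodh{\midh{\mm}{j} \times \rr_h^{j,n}}{\dth{\mm}{j}}|
\lesssim \delta k \sum_{j=0}^{i}\norm{\dth{\mm}{j}}{h}^2 + \frac{k\,\norm{\mm_h^0}{\LL\infty\Omega}^2}{\delta} \sum_{j=0}^{i}\norm{\rr_h^{j,n}}{h}^2
\lesssim \delta k \sum_{j=0}^{i}\norm{\dth{\mm}{j}}{h}^2 + \frac{T\epsilon^2}{\delta}.
\end{equation*}
The first term is absorbed into the left-hand side as in the proof of Lemma~\ref{lemma:mhktinfty}, while the second term is bounded uniformly (in fact, vanishes) as $\epsilon \to 0$. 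Thus Lemma~\ref{lemma:mhktinfty} goes through with a constant independent of $(h,k,\epsilon)$ for $\epsilon \le 1$, and the subsequence extraction of Lemma~\ref{lemma:extractsubsequences} applies to the output of Algorithm~\ref{alg:effectivemidpoint}.

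\emph{Step 3 (limit of the variational formulation and (a)).} Proceeding as in the proof of Theorem~\ref{theorem:mainresult}(a)--(b), the term $|\mm|=1$, the initial condition, and the bounded energy follow exactly as before, since they only use the modulus preservation at nodes (still valid by Remark~\ref{remark:propalgeffective}(ii)) and the boundedness from Step~2. Integrating~\eqref{eq:newsecondmidpoint} in time produces the same five integrals $I_{hk}^1,\dots,I_{hk}^5$ as in~\eqref{eq:variationalhk}, plus an additional residual integral
\begin{equation*}
I_{hk}^{\mathrm{res}} := \int_0^T \prodh{\overline{\mm}_{hk} \times \rr_{hk}}{\vvarphi_h}\d{t},
\end{equation*}
where $\rr_{hk}$ is the piecewise constant function equal to $\rr_h^{i,n+1}$ on $[t_i,t_{i+1})$. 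Using the $L^\infty$-bound on $\overline{\mm}_{hk}$, the norm equivalence~\eqref{eq:normhequiv1}, and the stopping criterion, I estimate
\begin{equation*}
|I_{hk}^{\mathrm{res}}| \lesssim \norm{\overline{\mm}_{hk}}{\LL\infty{\Omega_T}} \norm{\rr_{hk}}{\LL2{\Omega_T}} \norm{\vvarphi_h}{\LL\infty{\Omega_T}}
\lesssim \sqrt{T}\,\epsilon \xrightarrow{\epsilon\to 0} 0.
\end{equation*}
The convergence of $I_{hk}^1,\dots,I_{hk}^5$ towards their continuous counterparts is unchanged, so passing to the limit $(h,k,\epsilon)\to(0,0,0)$ yields~\eqref{eq:variational} and thus part~(a).

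\emph{Step 4 (physical weak solution, part (b)).} I would repeat the computation in the proof of Theorem~\ref{theorem:mainresult}(c), starting from the perturbed energy identity derived in Step~1. All manipulations of $T_1$ and $T_2$ (which only use linearity and self-adjointness of $\boldsymbol{\pi}$ together with algebraic identities) carry over unchanged, and an additional term
\begin{equation*}
k\sum_{j=0}^{i} \prodh{\midh{\mm}{j} \times \rr_h^{j,n}}{\dth{\mm}{j}}
\end{equation*}
appears on the right-hand side of the discrete energy identity. By the same Young-inequality estimate as in Step~2, its absolute value is bounded by $\tfrac12 \alpha k\sum_{j}\norm{\dth{\mm}{j}}{h}^2$ (absorbed) plus a term of order $\epsilon^2$. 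Taking weak lower semicontinuity of the energy norms and strong convergence of $\mm_h^0, \widehat{\ff}_{hk}, \boldsymbol{\Pi}_{hk}$ as in Section~3.5, the $\epsilon^2$-term vanishes in the limit and one obtains~\eqref{eq:energyestimate}.

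\emph{Main obstacle.} The delicate point is to ensure that the residual term is controlled in the \emph{right} norm uniformly in the mesh parameters. In Step~3 the control is straightforward via the $L^\infty$-bound on $\overline{\mm}_{hk}$ and the stopping criterion; in Step~4, the appearance of $\dth{\mm}{i}$ (whose $L^2$-in-time norm is only bounded, not small) forces us to invoke Young's inequality and therefore to work with $\norm{\rr_h^{j,n}}{h}^2$. The summability $k\sum_j \epsilon^2 \lesssim T\epsilon^2$ is exactly what saves us; the condition $k=\mathbf{o}(h^2)$ from Remark~\ref{remark:propalgeffective}(iii) is only needed to guarantee termination of the fixed-point iteration, not for the convergence analysis itself.
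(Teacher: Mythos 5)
The paper's own proof of this theorem is the single sentence ``The proof follows along the arguments of Section~\ref{section:mainresult},'' so you are correctly filling in what the authors leave implicit. Your overall strategy---run the proof of Theorem~\ref{theorem:mainresult} on the perturbed formulation~\eqref{eq:newsecondmidpoint}, control the residual through the stopping criterion $\norm{\rr_h^{i,n}}{h}\le\epsilon$, and exploit $k\sum_j\epsilon^2\lesssim T\epsilon^2$---is exactly the intended route, and your Steps~2--4 are sound.

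However, Step~1 contains a genuine error. The claim that ``testing with $\dth{\mm}{i}$ makes the perturbation drop out thanks to the cross product'' is false: the perturbation term is $\prodh{\midh{\mm}{i}\times\rr_h^{i,n}}{\vvarphi_h}$, and with $\vvarphi_h=\dth{\mm}{i}$ this becomes the triple scalar product $\prodh{\midh{\mm}{i}\times\rr_h^{i,n}}{\dth{\mm}{i}}$, which has no reason to vanish. Moreover, testing with $\hh_h^{i+\frac12}$ produces the residual $\prodh{\midh{\mm}{i}\times\rr_h^{i,n}}{\hh_h^{i+\frac12}}$, which is \emph{not} controllable by $\epsilon$ alone because $\norm{\hh_h^{i+\frac12}}{h}$ blows up like $h^{-2}$; it survives only because it cancels against the perturbation from the $\dth{\mm}{i}$ test. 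The clean way to see the correct identity is to write~\eqref{eq:newsecondmidpoint} in compact form with $\widetilde{\hh}_h^{i+\frac12}:=\hh_h^{i+\frac12}-\rr_h^{i,n}$ (the field the solver actually converged to) and test with $\dth{\mm}{i}$ and $\widetilde{\hh}_h^{i+\frac12}$; this gives
\begin{equation*}
\frac{\Cex}{2}\dtshort\norm{\nabla\mm_h^{i+1}}{\LL{2}{\Omega}}^2+\alpha\norm{\dth{\mm}{i}}{h}^2
=\prod{\dth{\mm}{i}}{\ppihmi+\halfh{\ff}{i}}-\prodh{\dth{\mm}{i}}{\rr_h^{i,n}},
\end{equation*}
whose residual $-\prodh{\dth{\mm}{i}}{\rr_h^{i,n}}$ differs from your claimed $\prodh{\midh{\mm}{i}\times\rr_h^{i,n}}{\dth{\mm}{i}}$. (Your identity also cannot follow from the steps you state: combining ``no residual from the $\dth{\mm}{i}$ test'' with ``residual $\prodh{\midh{\mm}{i}\times\rr_h^{i,n}}{\hh_h^{i+\frac12}}$ from the $\hh_h^{i+\frac12}$ test'' via the factor-$\alpha$ substitution would give a residual tested against $\hh_h^{i+\frac12}$, not $\dth{\mm}{i}$.) You are saved by the coincidence that both your incorrect residual and the correct one are bounded by $C\,\epsilon\,\norm{\dth{\mm}{i}}{h}$, so the Young-inequality absorption in Step~2 and the Step~4 energy argument carry through unchanged; but the derivation as you wrote it is wrong and would not have revealed the key cancellation of the $\hh_h^{i+\frac12}$ residual.
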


\section{Numerical Experiments} \label{section:numerics}

This section provides some numerical experiments for Algorithm~\ref{alg:effectivemidpoint}. Our implementation is based on the C++/Python library Netgen/NGSolve~\cite{ngsolve}. To compute the stray field, we additionally build on the C++/Python library BEM++~\cite{Smigaj2015} in step~\ref{item:step2} of Algorithm~\ref{alg:fk}.
The visualization of the numerical results is done with ParaView \cite{Ahrens2005}.

\begin{figure}[t!]
\begin{subfigure}{0.48\textwidth}
\includegraphics[width=\linewidth]{./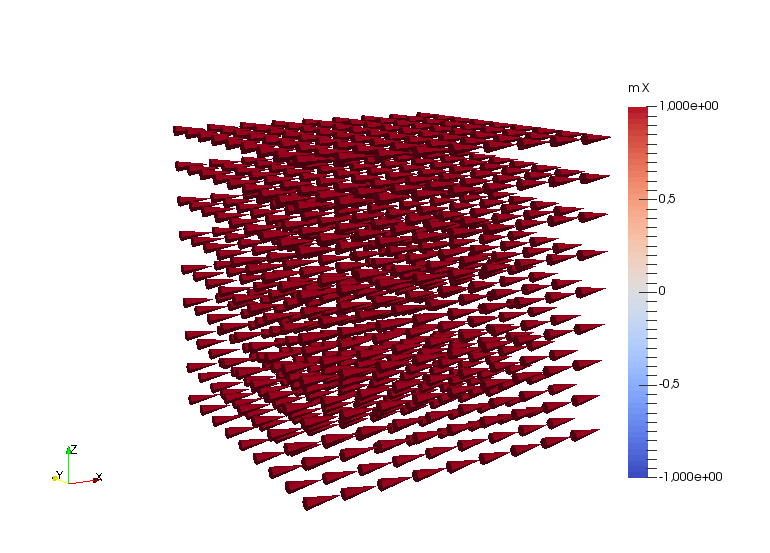}
\caption{$t=0$}
\label{subfig:emp1}
\end{subfigure}\hspace*{\fill}
\begin{subfigure}{0.48\textwidth}
\includegraphics[width=\linewidth]{./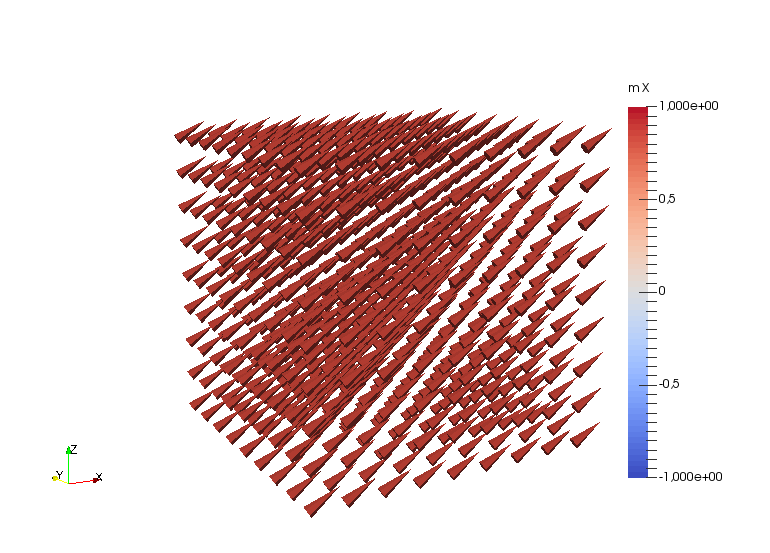}
\caption{$t=1$}
\label{subfig:emp2}
\end{subfigure}
\medskip
\begin{subfigure}{0.48\textwidth}
\includegraphics[width=\linewidth]{./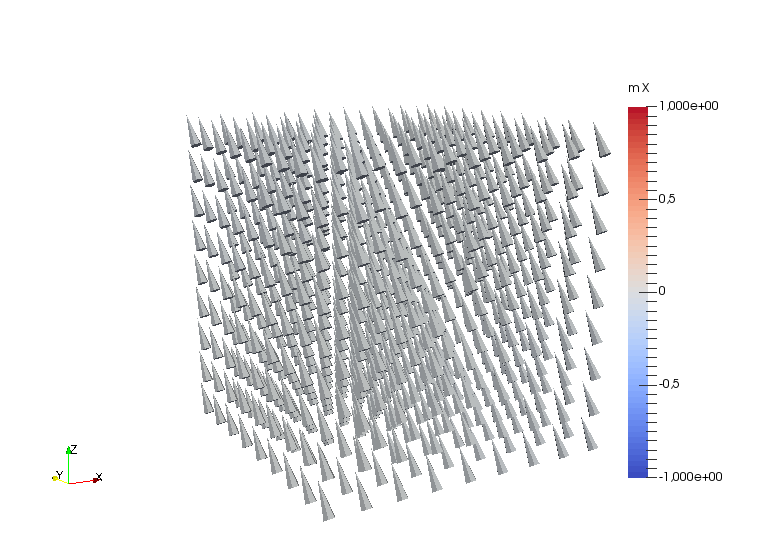}
\caption{$t=2$}
\label{subfig:emp3}
\end{subfigure}\hspace*{\fill}
\begin{subfigure}{0.48\textwidth}
\includegraphics[width=\linewidth]{./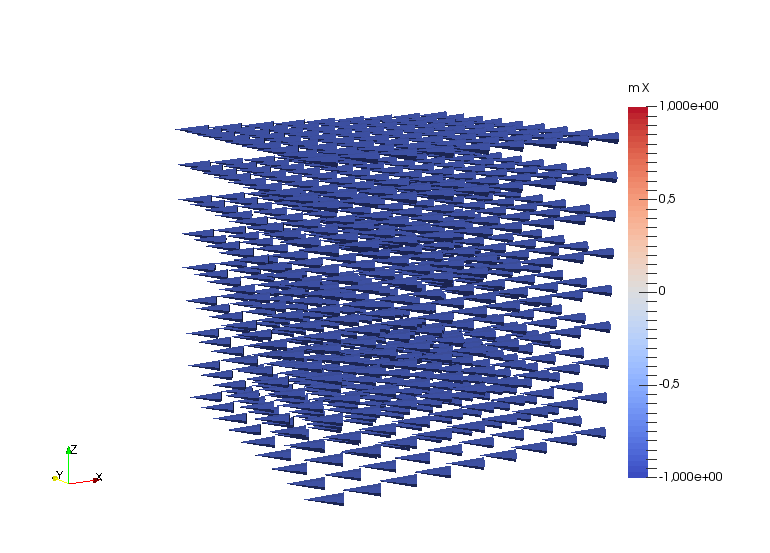}
\caption{$t=3$}
\label{subfig:emp4}
\end{subfigure}
\medskip
\begin{subfigure}{0.48\textwidth}
\includegraphics[width=\linewidth]{./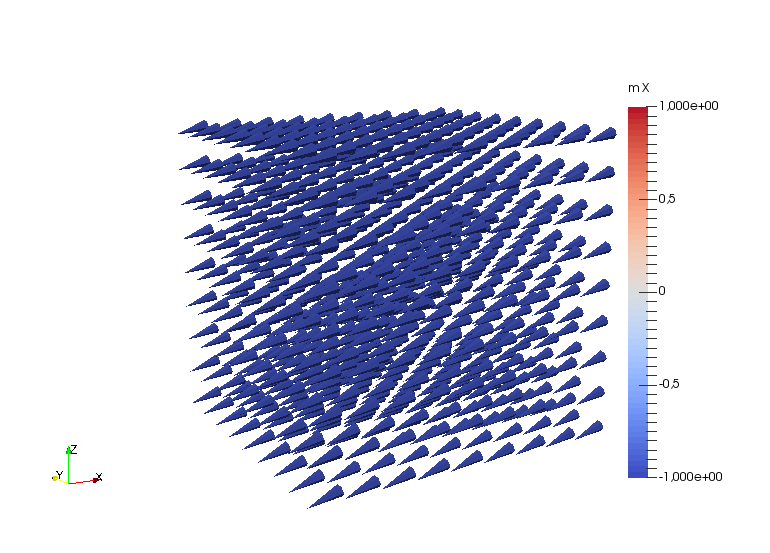}
\caption{$t=4$}
\label{subfig:emp5}
\end{subfigure}\hspace*{\fill}
\begin{subfigure}{0.48\textwidth}
\includegraphics[width=\linewidth]{./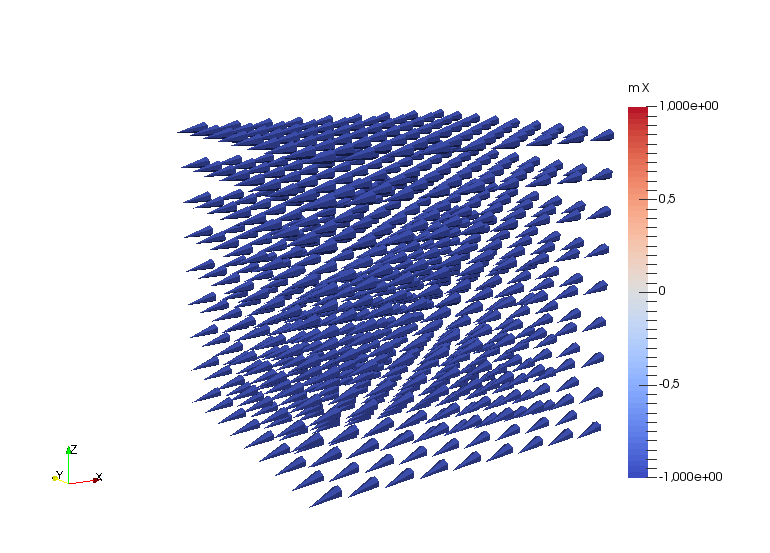}
\caption{$t=5$}
\label{subfig:emp6}
\end{subfigure}
\caption{Experiment of Section~\ref{section:experiment:academic}: Snapshots of the magnetization.}
\label{fig:whathappens}
\end{figure}

\begin{table}
 \begin{tabular}{p{0.15\textwidth}||p{0.15\textwidth}||p{0.15\textwidth}|p{0.15\textwidth}|p{0.15\textwidth}}
   & MP absolute & MP relative & AB relative & EE relative \\
  \hline\hline
  $k=0.0016$ 		&	$17.50$ &  $100\%$  & $100\%$  & $104.52\%$ \\
  \hline
  $k=0.0008$		&	 $8.27$ &  $100\%$  & $99.99\%$  & $104.85\%$  \\
  \hline
  $k=0.0004$		&	 $5.67$ &  $100\%$  & $100\%$  & $103.14\%$  \\
  \hline
  $k=0.0002$		&	 $4.34$ &  $100\%$  & $100\%$  & $107.00\%$  \\
  \hline
  $k=0.0001$		&	 $3.80$ &  $100\%$  & $100\%$  & $103.13\%$  \\
 \end{tabular}
 \caption{Experiment of Section~\ref{section:experiment:academic}: Number of iterations for one time-step and different treatments~\eqref{eq:different_approaches} of the stray field, where we provide the absolute numbers for the midpoint approach (MP absolute) as well as the relative numbers of the Midpoint approach (MP relative), the Adams--Bashforth approach (AB relative), and the explicit Euler approach (EE relative).}
 \label{table:average_iterations}
\end{table}

\begin{table}
 \begin{tabular}{p{0.15\textwidth}||p{0.15\textwidth}||p{0.15\textwidth}|p{0.15\textwidth}|p{0.15\textwidth}}
   & MP absolute & MP relative & AB relative & EE relative \\
   \hline\hline
   $0.0016$  & $1.21$  & $100\%$ & $60.00\%$ &  $62.22\%$ \\
   \hline
   $0.0008$  & $0.61$  & $100\%$ & $63.59\%$ &  $64.49\%$ \\
   \hline
   $0.0004$  & $0.47$  & $100\%$ & $60.40\%$ &  $63.33\%$ \\
   \hline
   $0.0002$  & $0.35$  & $100\%$ & $67.38\%$ &  $66.07\%$ \\
   \hline
   $0.0001$  & $0.31$  & $100\%$ & $64.16\%$ &  $64.72\%$ \\
 \end{tabular}
 \caption{Experiment of Section~\ref{section:experiment:academic}: Computational time for one time-step and different treatments~\eqref{eq:different_approaches} of the stray field, where we provide the absolute time (in s) for the Midpoint approach (MP absolute) as well as the relative times of the Midpoint approach (MP relative), the Adams--Bashforth approach (AB relative), and the explicit Euler approach (EE relative).}
 \label{table:average_duration}
\end{table}

\begin{figure}[ht]
\centering
\begin{tikzpicture}
\pgfplotstableread{plots/cumulative.dat}{\data}
\begin{axis}[
xlabel={Time},
ylabel={Duration in \si{\second}},
width = 100mm,
legend style={
legend pos= north west},
xmax=5,
xmin=0,
ymin=0,
ymax=4e3,
]
\addplot[red,ultra thick] table[x=time, y=ee] {\data};
\addplot[blue,ultra thick, dashed] table[x=time, y=ab] {\data};
\addplot[cyan,ultra thick] table[x=time, y=mp] {\data};
\legend{EE,AB,MP}
\end{axis}
\end{tikzpicture}
\caption{Experiment of Section~\ref{section:experiment:academic}: Cumulative computational time for $k=8\cdot10^{-4}$ and different treatments~\eqref{eq:different_approaches} of the stray field.}
\label{fig:duration}
\end{figure}
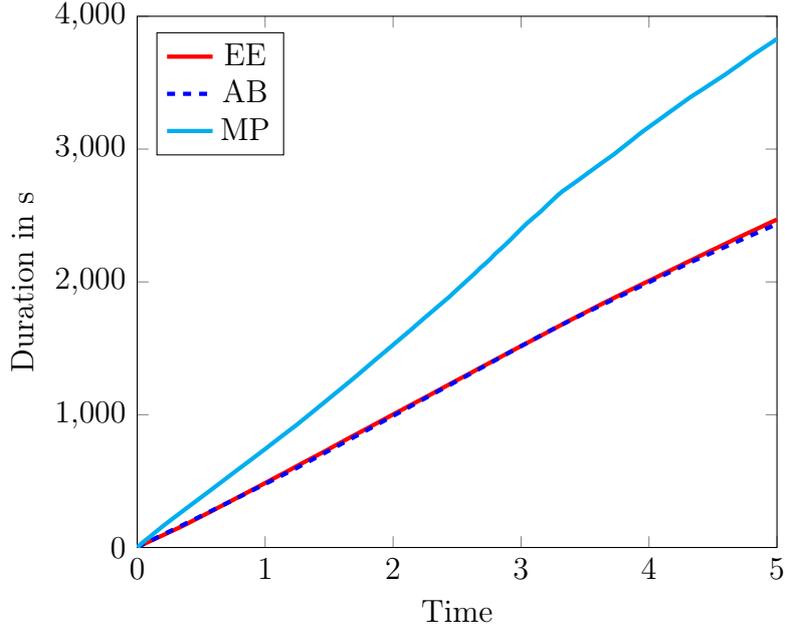
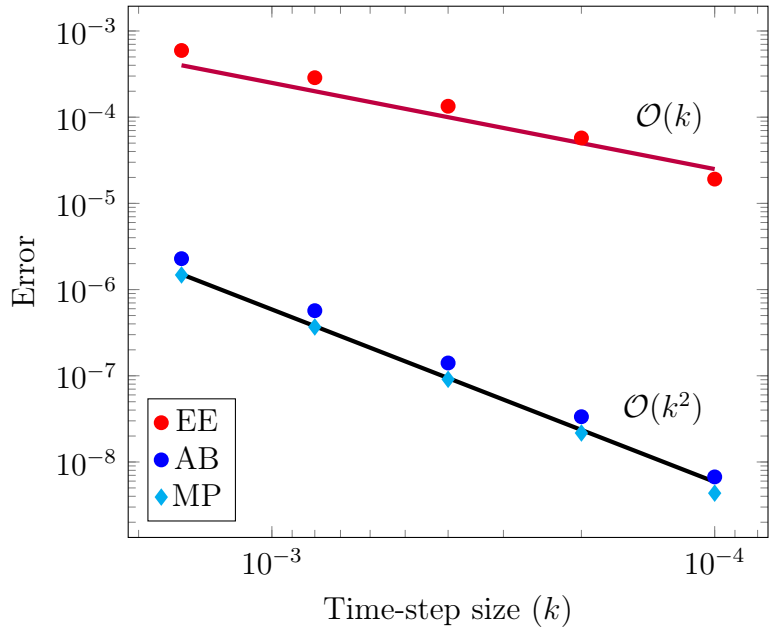
\begin{figure}[ht]
\centering
\begin{tikzpicture}
\pgfplotstableread{plots/rates.dat}{\data}
\begin{loglogaxis}[
xlabel={Time-step size ($k$)},
ylabel={Error},
width = 100mm,
legend style={
legend pos= south west},
x dir=reverse
]
\addplot[only marks,red,ultra thick] table[x=k, y=ee] {\data};
\addplot[only marks,blue,ultra thick] table[x=k, y=ab] {\data};
\addplot[only marks,cyan,mark=diamond*,ultra thick] table[x=k, y=mp] {\data};
\addplot[purple,ultra thick] table[x=k, y expr={(\thisrow{k})/4}]{\data};
\addplot[black,ultra thick] table[x=k, y expr={(\thisrow{k}*\thisrow{k})/1.7}]{\data};
\node at (axis cs:1e-4,2e-4) [anchor=north east] {$\mathcal{O}(k)$};
\node at (axis cs:1e-4,2e-8) [anchor=south east] {$\mathcal{O}(k^2)$};
\legend{EE,AB,MP} %
\end{loglogaxis}
\end{tikzpicture}
\caption{
Experiment of Section~\ref{section:experiment:academic}:
Reference error $\max_j \norm{\mm_{hk_{\textrm{ref}}}(t_j) - \mm_{hk}(t_j)}{\LL{2}{\Omega}}$ for different $k$ and treatments~\eqref{eq:different_approaches} of the stray field.
}
\label{fig:convergence_order}
\end{figure}

\subsection{Academic experiment and empirical convergence rates}
\label{section:experiment:academic}
This experiment aims to provide some insight into the accuracy and the computational effort for the different approaches~\eqref{eq:different_approaches} for the lower-order contributions. We consider LLG~\eqref{eq:LLG} in nondimensional form with $\Omega := (0,1)^3 \subset \R^3$, constant initial state $\mm^0 \equiv (1,0,0)$, constant external field $\ff \equiv (-2,-1/2,0)$, and finite time $T=5$. Besides exchange field and external field, the effective field $\heff$ also involves the stray field. We use a fixed uniform mesh $\Trian$ which consists of $3072$ tetrahedrons. Finally, we choose the parameter $\epsilon = 10^{-10}$ in Algorithm~\ref{alg:effectivemidpoint} to stop the iterative solver. 

Figure~\ref{fig:whathappens} shows some snapshots of the magnetization at times $t\in\{0,1,2,3,4,5\}$. As expected, the magnetization aligns with the applied externed field $\ff$ as time evolves.

We run Algorithm~\ref{alg:effectivemidpoint} for different time-step sizes $k := q \cdot 10^{-4}$ with $q \in \{1,2,4,8,16\}$. Table~\ref{table:average_iterations} provides the average number of fixed-point iterations per time-step of Algorithm~\ref{alg:effectivemidpoint}. As expected, the computational time decreases with the time-step size, since the fixed-point iteration in step~(iii) of Algorithm~\ref{alg:effectivemidpoint} then requires less steps until it terminates. We observe that the explicit treatment of the stray field by Adams--Bashforth~\eqref{eq:appproach_ab} resp. explicit Euler~\eqref{eq:approach_ee} roughly requires the same number of fixed-point iterations resp. increases the number of fixed-point iterations by about $5\%$ when compared to the implicit midpoint rule~\eqref{eq:approach_mp}. On the other hand, Table~\ref{table:average_duration} provides the computational times per time-step. Recall that the stray field computation by Algorithm~\ref{alg:fk} requires the solution of two additional linear systems plus the evaluation of a boundary integral operator. As expected, the treatment of the stray field by the implicit midpoint rule~\eqref{eq:approach_mp} is the most expensive approach, since the stray field is computed in each step of the fixed-point iteration. On the other hand, Adams--Bashforth~\eqref{eq:appproach_ab} and explicit Euler~\eqref{eq:approach_ee} lower the cost per time-step down to approximately $65\%$.  

Figure~\ref{fig:duration} displays the accumulation of the computational times until $T=5$ for $k=8\cdot10^{-4}$. Overall, the  explicit approaches by Adams--Bashforth~\eqref{eq:appproach_ab} and explicit Euler~\eqref{eq:approach_ee} only require $65\%$ of the computational time when compared to the implicit midpoint rule~\eqref{eq:approach_mp}.

Finally, Figure~\ref{fig:convergence_order} compares the different approaches~\eqref{eq:different_approaches} with respect to accuracy in terms of the experimental convergence rate.
Since the exact solution is unknown, we consider the error $\max_{0 \leq j \leq M} \norm{\mm_{hk_{\textrm{ref}}}(t_j) - \mm_{hk}(t_j)}{\LL{2}{\Omega}}$ with respect to a reference solution $\mm_{hk_{\textrm{ref}}}$, constructed from the output of Algorithm~\ref{alg:effectivemidpoint} for a finer time-step size $k_{\rm ref} := 5 \cdot 10^{-5}$.
As expected, both the implicit midpoint rule~\eqref{eq:approach_mp} and the explicit Adams--Bashforth~\eqref{eq:appproach_ab} approach exhibit second-order convergence and lead to approximately the same accuracy, while the treatment of the stray field by the explicit Euler approach~\eqref{eq:approach_ee} lowers the possible convergence rate down to linear.

Overall, the numerical results clearly underpin that the proposed explicit Adams--Bashforth approach~\eqref{eq:appproach_ab} is favorable for the treatment of the stray field.

\subsection{$\boldsymbol{\mu}$MAG standard problem \#5}
\label{section:mumag5}
To test our method for the simulation of practically relevant problem sizes, we consider the $\mu$MAG standard problem \#5, proposed by the Micromagnetic Modeling Activity Group~\cite{mumag} of the National Institute of Standards and Technology (NIST) of Gaithersburg (USA).

The computational domain is a ferromagnetic film $\widetilde{\Omega}$ with dimensions \SI[scientific-notation=false]{100}{\nano\meter} $\times$ \SI[scientific-notation=false]{100}{\nano\meter} $\times$ \SI[scientific-notation=false]{10}{\nano\meter}, aligned with the $x$, $y$, and $z$ axes of a Cartesian coordinate system, with origin at the center of the film.

The dynamics is driven by LLG with physical units and we make use of capital letters to distinguish it from the nondimensional form~\eqref{eq:LLG} of LLG:
\begin{subequations}\label{eq:LLG_physical}
\begin{align}
\partial_t \MM &= -\gamma_0 \MM \times \mathbf{H}_{\textrm{eff}} + \frac{\alpha}{M_s} \MM \times \partial_t \MM & &\textrm{ in } \left(0,T\right) \times \widetilde{\Omega}, \\
\partial_{\mathbf{n}} \MM &= \0 && \textrm{ on } \left(0,T\right) \times \partial \widetilde{\Omega}, \\
\MM(0) &= \MM^0 && \textrm{ in } \widetilde{\Omega}, 
\end{align}
where
\begin{align}
\label{eq:LLG_physical4}
\mathbf{H}_{\textrm{eff}} := \frac{2A}{\mu_0 M_s^2} \Delta\MM + \boldsymbol{\pi}(\MM)
\quad
\text{with }
\boldsymbol{\pi}(\MM) 
 := \mathbf{H}_{\textrm{s}}\vert_\Omega + \mm\times(\widetilde\vv\cdot\nabla)\mm + \xi\,(\widetilde\vv\times\nabla)\mm.
\end{align} 
\end{subequations}
The constants $\gamma_0 =$ \SI{2.21e5}{\meter\per\ampere\per\second} and $\mu_0 =$ \SI{4 \pi e-7}{\newton\per\ampere\squared} denote the gyromagnetic ratio and the magnetic permeability, respectively.
As for the material parameters, we consider the values of permalloy, i.e., $A=$ \SI{1.3e-11}{\joule\per\meter} for the exchange stiffness constant, $M_s=$ \SI{8.0e5}{\ampere\per\meter} for the saturation magnetization, and $\alpha=$~\num[scientific-notation=false]{0.1} for the damping parameter.
The lower-order terms in~\eqref{eq:LLG_physical4} comprise the stray field $\mathbf{H}_{\textrm{s}} = - \nabla u$, where the magnetostatic potential $u$ is the solution of the transmission problem~\eqref{eq:magnetostatic} for $\mm=\MM/M_s$, as well as the Zhang--Li contribution, with $\widetilde{\vv}$ being the spin velocity vector (in~\si{\meter\per\second}) and $\xi>0$ the ratio of nonadiabacity.
The initial state is obtained by solving~\eqref{eq:LLG_physical} for the initial condition $\widetilde\MM_0(x,y,z) = M_s\,(-y,x,10) / \sqrt{x^2 + y^2 + 100}$ and $\widetilde{\vv} = \0$ for a sufficiently long time, until the equilibrium configuration is reached; see Figure~\ref{subfig:mumag5_initial}.
Then, for $t \geq 0$, we set $\widetilde{\vv} := (-72.17,0,0)$ and $\xi := 0.05$, and simulate the system until the system reaches the new equilibrium (the choice $T=$ \SI{8}{\nano\second} is sufficient); see Figure~\ref{subfig:mumag5_end}.

With the scaling parameter $L:=10^{-9}$, the function $\mm := \MM / M_s$ fulfills the nondimensional Gilbert form~\eqref{eq:LLG} with 
\begin{align}
\Cex := \frac{2A}{\mu_0 M_s^2 L^2} \quad \textrm{and} \quad
\vv := - \frac{1}{\gamma_0 M_s L} \widetilde{\vv}
\end{align}
in~\eqref{eq:defppizl} and $\Omega := (-50,50)^2\times(-5,-5)$.

To discretize the problem, we employ a regular triangulation of $\Omega$ into approximately $25 000$ elements and choose $k$ in order to obtain a time-step size of \SI{0.005}{\pico\second} in physical units.
We use $\epsilon = 5 \cdot 10^{-5}$ in Algorithm~\ref{alg:effectivemidpoint} to stop the iterative solver.
For the lower-order contributions, we use the explicit Adams--Bashforth approach~\eqref{eq:appproach_ab}.

In Figure~\ref{fig:average_xy}, we plot the time evolution of the averaged value of the $x$- resp.\ $y$-component of $\mm$, and compare our results with those obtained with OOMMF~\cite{dp1999}.
Due to the different nature of the considered methods (e.g., FEM vs.\ FDM for the spatial discretization, FFT vs.\ Algorithm~\ref{alg:fk} for the computation of the stray field, adaptive vs.\ uniform time-stepping), we cannot expect a perfect quantitative agreement of the simulation results.
However, the comparison shows that the qualitative behavior of the solutions is preserved.

\begin{figure}[t!]

\begin{subfigure}{0.48\textwidth}
\includegraphics[width=0.9\textwidth]{./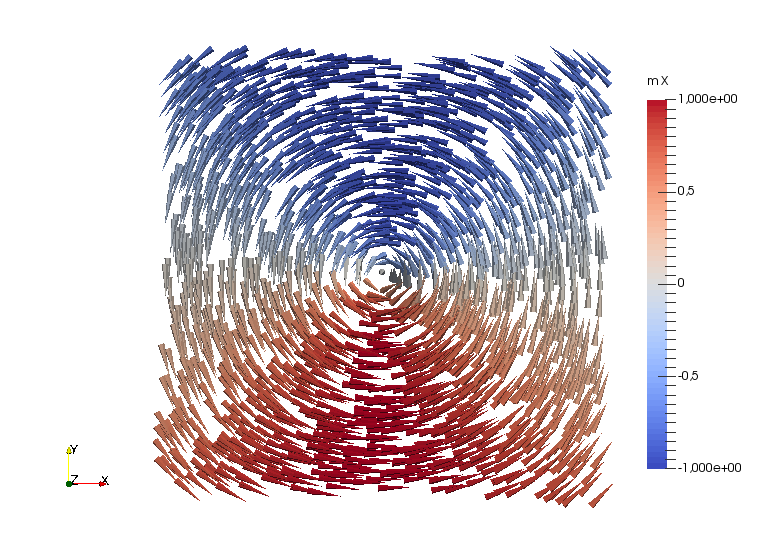}
\caption{Initial vortex state at $t =$ \SI{0}{\nano\second}.}
\label{subfig:mumag5_initial}
\end{subfigure}\hspace*{\fill}
\begin{subfigure}{0.48\textwidth}
\includegraphics[width=0.9\textwidth]{./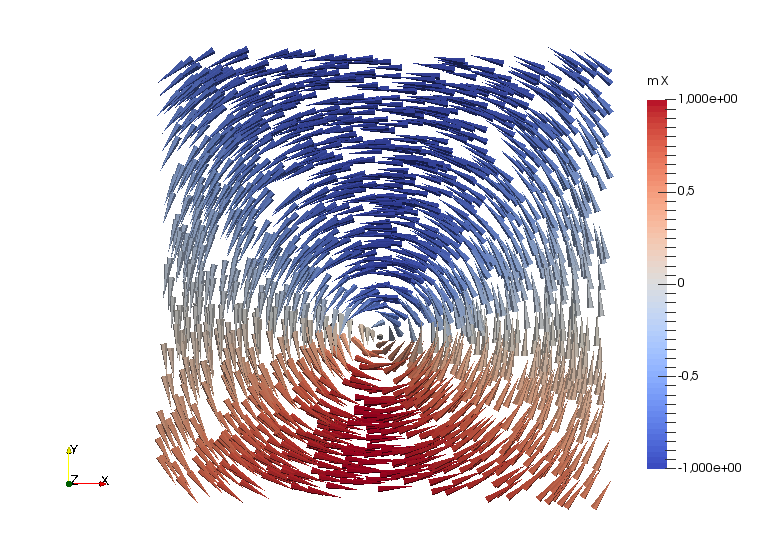}
\caption{Final vortex state at $t =$ \SI{8}{\nano\second}.}
\label{subfig:mumag5_end}
\end{subfigure}
\caption{$\mu$MAG standard problem \#5: Magnetization at different time-steps.}
\end{figure}

\begin{figure}[ht]
\centering
\begin{tikzpicture}
\pgfplotstableread{plots/sp5-oommf.dat}{\oommf}
\pgfplotstableread{plots/sp5-mp.dat}{\mp}
\begin{axis}[
width = 100mm,
xlabel={Time (in \si{\nano\second})},
xmin=0,
xmax=8,
ymin=-0.4,
ymax=0.4,
]
\addplot[blue,ultra thick] table[x=t, y=mx]{\mp};
\addplot[cyan,ultra thick] table[x=t, y=my]{\mp};
\addplot[purple,dashed,ultra thick] table[x=t, y=mx]{\oommf};
\addplot[red,dashed,ultra thick] table[x=t, y=my]{\oommf};
\legend{NGS $\langle \mm_x \rangle$, NGS $\langle \mm_y \rangle$, OOMMF $\langle \mm_x \rangle$, OOMMF $\langle \mm_y \rangle$}
\end{axis}
\end{tikzpicture}
\caption{$\mu$MAG standard problem \#5: Comparison of the results obtained by Algorithm~\ref{alg:effectivemidpoint} with OOMMF.}
\label{fig:average_xy}
\end{figure}
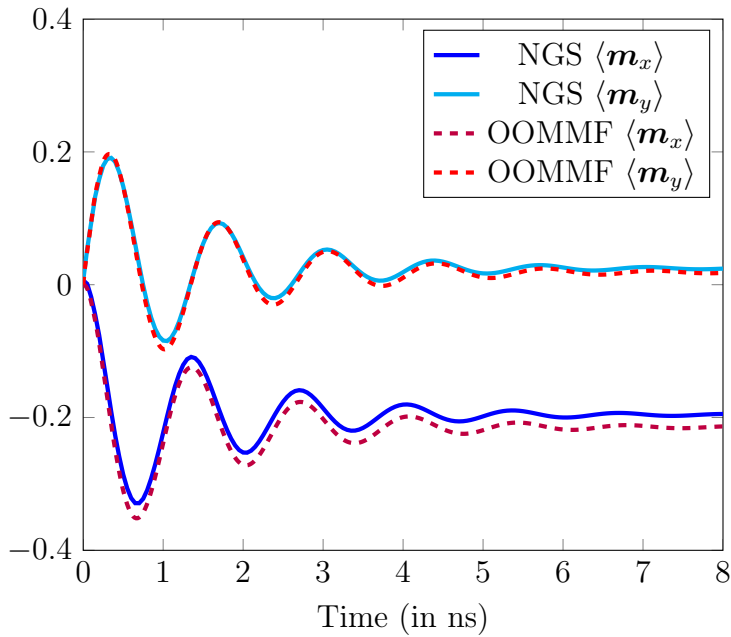

\section*{}

{\bf Acknowledgement.} The authors acknowledge support of the Vienna Science and Technology fund (WWTF) under grant MA14-44, of the Austrian Science Fund (FWF) under grant W1245, and of TU Wien through the innovative projects initiative.
We thank Alexander Rieder (TU Wien) and Alexander Haberl (TU Wien) for their help with coupling NGSolve to the BEM++ library.

\bibliographystyle{alpha}
\bibliography{ref}
\end{document}